\documentclass[11pt]{amsart}
\usepackage{amsfonts}
\usepackage{amsmath}
\usepackage{amsthm}
\usepackage{bm}
\usepackage{graphicx}
\usepackage{natbib}
\usepackage{bigints}
\usepackage{color}
\usepackage{comment}
\usepackage{enumitem}
\usepackage{mathrsfs}

\addtolength{\topmargin}{0in}
\setlength{\textheight}{8.5in}
\setlength{\textwidth}{6.5in}
\addtolength{\oddsidemargin}{-.8in}
\addtolength{\evensidemargin}{-.8in}
\numberwithin{equation}{section}

\pagestyle{myheadings}
\allowdisplaybreaks

\theoremstyle{plain}
\newtheorem{theorem}{Theorem}[section]
\newtheorem{lemma}[theorem]{Lemma}
\newtheorem{proposition}[theorem]{Proposition}
\newtheorem{corollary}[theorem]{Corollary}
\theoremstyle{definition}
\newtheorem{definition}[theorem]{Definition}
\newtheorem{remark}[theorem]{Remark}
\newtheorem{example}[theorem]{Example}
\newtheorem{assumption}[theorem]{Assumption}

\def\beqn{\begin{equation}}
\def\beqn*{$$}
\def\eeqn{\end{equation}}
\def\eeqn*{$$}

\newcommand{\BX}{{\bf X}}

\newcommand{\BZ}{{\bf Z}}
\newcommand{\BV}{{\bf V}}
\newcommand{\bx}{{\bf x}}
\newcommand{\by}{{\bf y}}
\newcommand {\ba}{{\bf a}}
\newcommand {\bb}{{\bf b}}
\newcommand {\bp}{{\bf p}}

\newcommand{\reals}{{\mathbb R}}
\newcommand{\bbr}{\reals}

\newcommand{\vep}{\varepsilon}

\newcommand{\calR}{\mathcal{R}}

\newcommand{\scrS}{\mathscr{S}}
\newcommand{\scrL}{\mathscr{L}}
\newcommand{\scrR}{\mathscr{R}}
\newcommand{\scrG}{\mathscr{G}}
\newcommand{\scrD}{\mathscr{D}}

\newcommand{\eid}{\stackrel{d}{=}}

\newcommand{\vconv}{\stackrel{v}{\rightarrow}}

\newcommand{\one}{{\bf 1}}

\begin{document}

\title[Multivariate Subexponential Distributions]
{Multivariate Subexponential Distributions and Their Applications}

\author{Gennady Samorodnitsky}
\address{School of Operations Research and Information Engineering\\
and Department of Statistical Science \\
Cornell University \\
Ithaca, NY 14853}
\email{gs18@cornell.edu}

\author{Julian Sun}
\address{School of Operations Research and Information Engineering\\
Cornell University \\
Ithaca, NY 14853}
\email{ys598@cornell.edu}

\thanks{This research was partially supported by the ARO
grant  W911NF-12-10385 at Cornell University.}

\subjclass{Primary  60E05, 91B30. Secondary  60G70. }
\keywords{heavy tails, subexponential distribution, regular variation,
  multivariate, insurance portfolio, ruin probability
\vspace{.5ex}}

\begin{abstract}
 We propose a new definition of a multivariate subexponential
 distribution. We compare this definition with the two existing
 notions of multivariate subexponentiality, and compute the
 asymptotic  behaviour of the ruin probability in the context of an
 insurance portfolio, when multivariate subexponentiality
 holds. Previously such results were available only in the case of
 multivariate regularly varying claims.
\end{abstract}

\maketitle

\section{Introduction}
\label{sec:intro}

Subexponential distributions are commonly viewed as the most general
class of heavy tailed distributions. The notion of subexponentiality
was introduced by \cite{chistyakov:1964} for distributions supported
by $[0,\infty)$; if $F$ is such a distribution, and $X_1$, $X_2$ are
i.i.d. random variables with the law $F$, then $F$ is
subexponential if
\begin{equation} \label{e:def.1dim}
\lim_{x\to\infty} \frac{P(X_1+X_2>x)}{P(X_1>x)}=2\,.
\end{equation}
The notion of subexponentiality was later extended to distributions
supported by the entire real line (and not only by the positive
half-line); see e.g. \cite{willekens:1986}.
The best known subclass of subexponential distributions is that of
regularly varying distributions, but the membership in the class of
subexponential distributions does not require power-like tails; we
review the basic information on one-dimensional subexponential
distributions in Section \ref{sec:1dim}.

The definition \eqref{e:def.1dim} of subexponential distributions
means that the sum of two i.i.d. random variables with a subexponential
distribution is large only when one of these random variables is
large. The same turns out to be true for the sum of an arbitrary
finite number of terms and, in many cases, for the sum of a random
number of terms. Theoretically, this leads to the ``single large
jump'' structure of large deviations for random walks with
subexponentially distributed steps; see
e. g. \cite{foss:konstantopoulos:zachary}.  In practice, this has
turned out to be particularly important in applications to ruin
probabilities. In ruin theory the situation where the claim sizes
(often assumed to be independent with identical distribution) have
a subexponential distribution is usually referred to as the
non-Cram\'er case. The ``single large jump'' property of
subexponential distributions leads to a
well known form of the asymptotic behaviour of the ruin probability,
and to a particular structure of the surplus path leading to the ruin;
see e.g. \cite{embrechts:kluppelberg:mikosch:1997} and
\cite{asmussen:2000}.

It is desirable to have a notion of a multivariate subexponential
distribution. The task is of a clear theoretical interest, and it is
of an obvious interest in applications. A typical insurance company,
for instance, has multiple insurance portfolios, with dependent
claims, so it would be useful if one could build a model in which
claims could be said to have a multivariate subexponential
distribution. Recall that there exists a well developed notion of a
multivariate distribution with regularly varying tails; see
e.g. \cite{resnick:2007}. In comparison, a notion of a multivariate
subexponential distribution has not been developed to nearly the same
extent. To the best of our knowledge, a notion of multivariate
subexponentiality has been introduced twice, in
\cite{cline:resnick:1992} and in \cite{omey:2006}. Both of these
papers define a class (or classes) of multivariate distributions that
extend the the one-dimensional notion of a subexponential distribution
in a natural way. They show that their notions of multivariate
subexponentiality possess multidimensional analogs of important
properties of one-dimensional subexponential
distributions. Nonetheless, these notions have not become as widely
used as that of, say, a multivariate distribution with regularly
varying tails. In
this paper we introduce yet another notion of multivariate
subexponential distribution. As the reader will observe, this notion
is created with ruin probability applications in mind. We hope,
therefore, that this notion will turn out to be useful in that
area. However, we also hope that the notion we introduce will be found
useful in other areas as well.

This paper is organized as follows. In Section \ref{sec:1dim} we
review the basic properties of one-dimensional subexponential
distributions, in order to have a benchmark for the properties we
would like a multivariate subexponential
distribution to have. In Section \ref{sec:previous} we discuss the
definitions of multivariate subexponentiality of
\cite{cline:resnick:1992} and in \cite{omey:2006}. Our notion of
multivariate subexponential distributions is introduced in Section
\ref{sec:our.def}. Some applications of that notion to multivariate
ruin problems are discussed in Section \ref{sec:ruin}.

\section{A review of one-dimensional subexponentiality}
\label{sec:1dim}

In this section we review the basic properties of one-dimensional
subexponential distributions. We denote the class of such
distributions (and random variables with such distributions) by
$\scrS$. Unless stated explicitly, we do not assume
anymore that a random variable with a subexponential distribution $F$
is nonnegative; such a random variable (or its distribution) is called
subexponential if the nonnegative random variable $X_+=\max(X,0)$ is
subexponential. Most of the not otherwise attributed facts stated
below can be found in
\cite{embrechts:goldie:veraverbeke:1979}. We use the standard notation
$\bar F=1-F$ for the tail of a distribution $F$.

If a distribution $F\in \scrS$, then $F$ is {\it long-tailed}: for any
$y\in\bbr$,
\begin{equation} \label{e:long.tail}
\lim_{x\to\infty} \frac{\bar F(x+y)}{\bar F(x)} =1
\end{equation}
(implicitly assuming that $\bar F(x)>0$ for all $x$.) The class of
all long-tailed distributions is denoted by $\scrL$. Note that $\scrS$
is a proper subset of $\scrL$; see
e.g. \cite{embrechts:goldie:1980}. Furthermore, the class $\scrL$ of
long-tailed distributions is
closed under convolutions, while the class $\scrS$ of subexponential
distributions is not, see \cite{leslie:1989}.

A distribution $F$ has a regularly varying right tail if there is
$\alpha\geq 0$ such that for every $b>0$
\begin{equation} \label{e:regvar}
\lim_{x\to\infty} \frac{\bar F(bx)}{\bar F(x)} = b^{-\alpha}\,,
\end{equation}
and the parameter $\alpha$ is the exponent of regular variation. The
class of distributions with a regularly varying right tail is denoted
by $\scrR$ (or $\scrR(\alpha)$ if we wish to emphasize the exponent of
regular variation.) Then $\scrR\subset\scrS$. If one views
$\scrR$ as the class of distributions with ``power-like'' right tails,
all distributions with ``power-like'' right tails are
subexponential. This statement, however, should be treated
carefully; other classes of distributions can be referred to as having
"power-like" right tails, and not all of them form subclasses of
$\scrS$.  Indeed, consider the class $\scrD$ of distributions with
{\it    dominated varying tails}, defined by the property
\begin{equation} \label{e:domvar}
\liminf_{x\to\infty} \frac{\bar F(2x)}{\bar F(x)} >0\,.
\end{equation}
One could view a distribution $F\in \scrD$ as having a
``power-like'' right tail. However, $\scrD\not\subset\scrS$. We note,
on the other hand, that it is still true that
$\scrD\cap\scrL\subset\scrS$; see \cite{goldie:1978}.

Many distributions that do not have
``power-like'' right tails are subexponential as well. Examples
include the log-normal distribution, as well as the Weibull
distribution with the shape parameter smaller than 1; see
e.g. \cite{pitman:1980}.

Let $X_1,X_2,\ldots$ be i.i.d. random variables with a subexponential
distribution. The defining property \eqref{e:def.1dim} extends, automatically, to
any finite number of terms, i.e.
\begin{equation} \label{e:nterms}
\lim_{x\to\infty} \frac{P(X_1+\ldots +X_n>x)}{P(X_1>x)}=n\ \
\text{for any $n\geq 1$.}
\end{equation}
Moreover, the number of terms can also be random. Let $N$ be a random
variable independent of the i.i.d. sequence $X_1,X_2,\ldots$ and
taking values in the set of nonnegative integers. If
\begin{equation} \label{e:exp.mom}
E\tau^N<\infty \ \ \text{for some $\tau>1$,}
\end{equation}
then
\begin{equation} \label{e:rand.terms}
\lim_{x\to\infty} \frac{P(X_1+\ldots +X_N>x)}{P(X_1>x)}=EN\,.
\end{equation}

The classical one-dimensional (Cram\'er-Lundberg)  ruin problem can be
described as follows. Suppose that an insurance company has an initial
capital $u>0$. The company receives a stream of premium
income at a constant rate $c>0$ per unit of time. The company has to
pay claims that arrive according to a rate $\lambda$ Poisson
process. The claim sizes are assumed to be i.i.d. with a finite mean
$\mu$ and independent of the arrival process. If $U(t)$ is the capital
of the company at time $t\geq 0$, then the ruin probability is defined
as the probability the company runs out of money at some point. This
probability is, clearly, a function of the initial capital $u$, and it
is often denoted by
$$
\psi(u) = P\bigl( U(t)<0 \ \ \text{for some $t\geq 0$}\bigr)\,.
$$
The {\it positive safety loading}, or the  {\it net profit
  condition},
$$
\rho := \frac{c}{\lambda\mu}-1>0
$$
says that, on average, the company receives more in premium income
than it spends in claim payments. If the net profit condition fails,
then an eventual ruin is certain. If the net profit condition holds,
then the ruin probability is a number in $(0,1)$, and its behaviour
for large values of the initial capital $u$ strongly depends on the
properties of the distribution $F$ of the claim sizes.  Let
$$
F_I(x) =\frac{1}{\mu} \int_0^x \bar F(y)\, dy, \ x\geq 0
$$
be the integrated tail distribution. If $F_I\in \scrS$, then
\begin{equation} \label{e:subexp.ruin}
\psi(u) \sim \rho^{-1} F_I(u) \ \ \text{as $u\to\infty$;}
\end{equation}
see Theorem 1.3.6 in \cite{embrechts:kluppelberg:mikosch:1997}.

\section{Existing definitions of multivariate subexponentiality}
\label{sec:previous}

The first known to us definition of multivariate subexponential distributions was
introduced by \cite{cline:resnick:1992}. They consider distributions
supported by the entire $d$-dimensional space $\bbr^d$ (and not only
by the nonnegative orthant). That paper defines both multivariate
subexponential distributions, and multivariate
{\it exponential distributions}. In our discussion here we only
consider the subexponential case. The definition is tied to a function
$\bb (t) = (b_1(t), \dots, b_d(t))$ such that $b_i(t) \to \infty$ as
$t \to \infty$ for $i = 1, \ldots, d$.

One starts with defining the class of long-tailed distributions,
i.e. a multivariate analog of the class $\scrL$ in
\eqref{e:long.tail}. Let $E = [ - \infty, \infty ]^d \setminus \left\{
  - \pmb{\infty} \right\}$, and let $\nu$ be a finite measure on $E$
concentrated on the purely infinite points, i.e. on $\{ - \infty, \infty \}^d
 \setminus \left\{   - \pmb{\infty} \right\}$, and such that
$\nu(\bx\in E:\, x_i=\infty)>0$ for each $i=1,\ldots, d$.
Then a probability distribution $F$ is
said to belong to the class $\scrL(\nu;\bb)$ if, as $t\to\infty$,
\begin{equation} \label{e:mult.longtail}
tF\bigl( \bb(t)+\cdot\bigr) \vconv \nu
\end{equation}
vaguely in $E$ (see \cite{resnick:1987} for a thorough treatment of
vague convergence of measures.) The class of subexponential
distributions (with respect to the same function $\bb$ and the same
measure $\nu$) is defined to be that subset $\scrS(\nu;\bb)$
of distributions $F$ in $\scrL(\nu;\bb)$ for which
$$
tF\ast F\bigl( \bb(t)+\cdot\bigr) \vconv 2\nu
$$
vaguely in $E$.

Corollary 2.4 in \cite{cline:resnick:1992} shows that $F\in
\scrS(\nu;\bb)$ if and only if $F\in \scrL(\nu;\bb)$ and the marginal
distribution $F_i$ of $F$ is in the one-dimensional subexponential
class $\scrS$ for each $i=1,\ldots, d$.

It is shown in \cite{cline:resnick:1992} that the distributions
in $\mathscr{S} (\nu, \mathbf{b})$ possess the natural multivariate
extensions of the properties of the one-dimensional subexponential
distributions mentioned in Section \ref{sec:1dim}. For example, if
$F\in \scrS(\nu,\bb)$, then for any $n\geq 1$, $F^{\ast n}\in
\scrS(n\nu,\bb)$. More generally, if $N$ is a random variable
satisfying \eqref{e:exp.mom}, and $H=\sum_{n=0}^\infty P(N=n) F^{\ast n}$,
then $H\in \scrS(EN\nu,\bb)$.

The distributions in $\scrS(\nu,\bb)$ also possess the right relation
with the  distributions with multivariate regularly varying tails. It
is natural, in this situation, to consider only distributions
supported by the nonnegative quadrant $R_+^d = [0,\infty)^d$. Recall
that any distribution $F$ supported by $R_+^d$ is said to have
regularly varying tails if
there is a Radon measure $\mu$ on $[0,\infty]^d \setminus \{ {\bf
  0}\}$ concentrated on finite points, and a function $\bb$ as above
such that, as $t\to\infty$,
\begin{equation} \label{e:regvar.d}
tF\bigl( \bb(t)\cdot\bigr)\vconv \mu
\end{equation}
vaguely in $[0,\infty]^d \setminus \{ {\bf   0}\}$; see
\cite{resnick:2007}.  Note that \eqref{e:regvar.d} allows for
different scaling in different directions, hence also different
marginal exponents of regular variation. This situation is sometimes
referred to as {\it non-standard regular variation}.
If we denote by $\scrR(\mu,\bb)$ the class of  distributions with
regularly varying tails satisfying \eqref{e:regvar.d}, then, as
shown in \cite{cline:resnick:1992}, $\scrR(\mu,\bb)\subset
\scrS(\nu,\bb)$ for some $\nu$.

As mentioned above, this definition of multivariate subexponentiality
requires, beyond marginal subexponentiality for all components, only
the joint long tail property \eqref{e:mult.longtail}. This property,
together with the nature of the limiting measure, makes this notion
somewhat inconvenient in applications, because it is not easy to see
how to use it on sets  in $\mathbb{R}^d$ that are not ``asymptotically
rectangular''.

Another observation worth making is that in
probability theory, many well established multivariate extensions of important
one-dimensional notions have a ``stability property'' with respect to
projections on one-dimensional subspaces (i.e., with respect to taking
linear combinations of the components.) Specifically, if the
distribution of a random vector $(X^{(1)},\ldots, X^{(d)})$ has, say,
a property $\scrG_d$ (the subscript $d$ specifying the dimension in
which the property holds), then the distribution of any
(non-degenerate) linear combination $\sum_1^d a_iX^{(i)}$ has the
property $\scrG_1$. This is true, for instance, for multivariate
regular variation, multivariate Gaussianity, stability and infinite
divisibility. Unfortunately, the definition of multivariate
subexponentiality by $\scrS(\nu,\bb)$ does not have this feature, as
the following example shows.

\begin{example} \label{crnonlin}
Consider a $2$-dimensional random vector $(X, Y)$ with nonnegative
coordinates such that $P(X + Y
= 2^n) = 2^{-(n+1)}$ for $n \geq 0$, with the mass distributed
uniformly on the simplex $\left\{ (x,y):\,  x , y \geq 0,\,  x + y = 2^n
\right\}$ for each $n \geq 0$. It is elementary to check that $X, Y
\in \mathscr{L} \cap \mathscr{D} \subset \mathscr{S}$. Furthermore,
for $2^n\leq x\leq 2^{n+1}$, $n=0,1,2,\ldots$ we have
$$
P(X>x)=P(Y>x) = 2^{-(n+1)}-\frac{x}{3}2^{-(2n+1)}
= 2P(X>x,Y>x)\,.
$$
If we define a function $b$ by $tP(X>b(t))=1$ for $t\geq 2$, then it
immediately follows that $(X, Y) \in \mathscr{L}(\nu; \mathbf{b})$
with $\bb(t)=(b(t),b(t))$ and
$$
\nu=\frac12\delta_{(-\infty,\infty)}+\frac12\delta_{(\infty,-\infty)}
+\frac12\delta_{(\infty,\infty)}\,,
$$
and the result of \cite{cline:resnick:1992} tells us that
$(X, Y) \in \mathscr{S}(\nu; \mathbf{b})$. It is clear, however, that
$$
\liminf_{x\to\infty} \frac{P(X+Y>x+1)}{P(X+Y>x)}=\frac12\,,
$$
so $X + Y$ does not even have a long-tailed, let alone
subexponential, distribution.
\end{example}

The second existing definition of multivariate subexponentiality we
are aware of is due to \cite{omey:2006}. Once again, this definition
concentrates on rectangular regions. The paper presents 3 versions of
the definition. The versions are similar, and we concentrate only on
one of them. Let $F$ be a probability distribution supported by the
positive quadrant in $\bbr^d$. Then one says that   $F \in
S(\mathbb{R}^d)$ if for all $\mathbf{x} \in (0,\infty]^d$ with $\min(x_i)
< \infty$,
\begin{align} \label{e:omey.S}
\lim_{t \to \infty} \frac{\overline{F^{*2}}(t \mathbf{x})}{\overline{F}(t \mathbf{x})} = 2.
\end{align}

This definition, like the definition of
\cite{cline:resnick:1992}, has the following property: a distribution
$F \in S(\mathbb{R}^d)$ if and only if each marginal distribution $F_i$ of
$F$ is a one-dimensional subexponential distribution, and a
multivariate long-tail property holds. In the present case the
long-tail property is
\begin{align} \label{omeyL}
\lim_{t \to \infty} \frac{\overline{F}(t \mathbf{x} -
  \mathbf{a})}{\overline{F}(t \mathbf{x})} = 1
\end{align}
for each $\mathbf{x} \in (0,\infty]^d$ with $\min(x_i)
< \infty$ and each $\ba\in [0,\infty)^d$. This follows from  Theorem 7
and Corollary 11 in \cite{omey:2006}.

The following statement shows that, in fact, the definition
\eqref{omeyL} of multivariate subexponentiality requires {\it only}
marginal subexponentiality of each coordinate.

\begin{proposition} \label{nonomey}
Let $F$ be a probability distribution supported by the
positive quadrant in $\bbr^d$.  Then $F \in
S(\mathbb{R}^d)$ if and only if all marginal distributions $F_i$ of
$F$ are subexponential in one dimension.
\end{proposition}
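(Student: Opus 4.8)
The plan is to reduce the whole statement to the one-dimensional long-tail property. Recall from the discussion above that, by Theorem~7 and Corollary~11 of \cite{omey:2006}, a distribution $F$ supported by the positive quadrant belongs to $S(\mathbb{R}^d)$ precisely when every marginal $F_i$ is one-dimensional subexponential \emph{and} the multivariate long-tail property \eqref{omeyL} holds. Granting this characterization, the ``only if'' implication of the proposition is immediate, and for the ``if'' implication it suffices to show that marginal subexponentiality of all the $F_i$ already forces \eqref{omeyL}. Since a subexponential distribution is long-tailed, cf.\ \eqref{e:long.tail}, it is in fact enough to deduce \eqref{omeyL} from long-tailedness of each $F_i$ alone.

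So fix $\mathbf{x}\in(0,\infty]^d$ with $\min_i x_i<\infty$ and $\mathbf{a}\in[0,\infty)^d$. Here $\bar F=1-F$ is the usual tail of the \emph{joint distribution function} $F$, so that $\bar F(\mathbf{y})=P\bigl(\bigcup_{i=1}^d\{X_i>y_i\}\bigr)$; the coordinates with $x_i=\infty$ contribute nothing to either $\bar F(t\mathbf{x})$ or $\bar F(t\mathbf{x}-\mathbf{a})$, so we may assume $0<x_i<\infty$ for every $i$. Since $tx_i-a_i\le tx_i$ coordinatewise, monotonicity gives $\bar F(t\mathbf{x})\le\bar F(t\mathbf{x}-\mathbf{a})$, so the ratio in \eqref{omeyL} is at least $1$ and only a matching upper bound is needed. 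For that, writing $A_i(\mathbf{y})=\{X_i>y_i\}$, I would use the elementary set inclusion
\begin{equation*}
\Bigl(\bigcup_i A_i(t\mathbf{x}-\mathbf{a})\Bigr)\setminus\Bigl(\bigcup_i A_i(t\mathbf{x})\Bigr)\ \subseteq\ \bigcup_i\bigl(A_i(t\mathbf{x}-\mathbf{a})\setminus A_i(t\mathbf{x})\bigr)=\bigcup_i\{tx_i-a_i<X_i\le tx_i\},
\end{equation*}
which, together with subadditivity of $P$, yields $\bar F(t\mathbf{x}-\mathbf{a})-\bar F(t\mathbf{x})\le\sum_{i=1}^d\bigl(\bar F_i(tx_i-a_i)-\bar F_i(tx_i)\bigr)$, where $\bar F_i$ denotes the tail of $F_i$.

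Dividing by $\bar F(t\mathbf{x})$ and bounding $\bar F(t\mathbf{x})\ge\bar F_i(tx_i)$ in the $i$-th summand (the union defining $\bar F(t\mathbf{x})$ contains the event $A_i(t\mathbf{x})$) gives
\begin{equation*}
\frac{\bar F(t\mathbf{x}-\mathbf{a})}{\bar F(t\mathbf{x})}\ \le\ 1+\sum_{i=1}^d\left(\frac{\bar F_i(tx_i-a_i)}{\bar F_i(tx_i)}-1\right),
\end{equation*}
and since $tx_i\to\infty$ as $t\to\infty$ for each fixed $x_i>0$, long-tailedness of $F_i$ (a consequence of its subexponentiality, see \eqref{e:long.tail}) sends each summand to $0$; as the sum is finite, the right-hand side tends to $1$, which together with the lower bound proves \eqref{omeyL} and hence $F\in S(\mathbb{R}^d)$. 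I do not expect a genuine obstacle here: once the cited characterization is invoked, the argument is a one-line union bound. The only points meriting care are the reduction itself — recognizing that the real content of the ``if'' direction is the passage from marginal to joint long-tailedness — and keeping in mind that $\bar F$ is $1-F$, i.e.\ the probability of a \emph{union} of one-dimensional exceedance events, which is precisely what makes the subadditivity bound available.
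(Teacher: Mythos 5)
Your proposal is correct and follows essentially the same route as the paper: both invoke Omey's characterization (Theorem 7 and Corollary 11) to reduce the ``if'' direction to verifying \eqref{omeyL} from marginal long-tailedness, and both establish it via the same bound $0\le \bar F(t\mathbf{x}-\mathbf{a})-\bar F(t\mathbf{x})\le \sum_i\bigl(\bar F_i(tx_i-a_i)-\bar F_i(tx_i)\bigr)$ together with $\bar F(t\mathbf{x})\ge \bar F_i(tx_i)$. The only cosmetic difference is in the ``only if'' direction, where the paper takes $\mathbf{x}$ with a single finite coordinate while you read it off directly from the cited characterization; both are fine.
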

\begin{proof}
By choosing $\bx$ with only one finite coordinate, we immediately see
that if $F \in S(\mathbb{R}^d)$, then $F_i \in \mathscr{S}$ for each $i
= 1, \dots, d$.

In the other direction, we know by the results of \cite{omey:2006},
that only the long-tail property \eqref{omeyL} is needed, in addition
to the marginal subexponentiality, to establish that $F \in
S(\mathbb{R}^d)$. Therefore, it is enough to check that  the long-tail
property \eqref{omeyL} follows from the marginal subexponentiality. In
fact, we will show that, if each $F_i$ is long-tailed, i.e. satisfies
\eqref{e:long.tail}, $i=1,\ldots, d$, then \eqref{omeyL} holds as
well.

Let $\epsilon > 0$. Fix $\mathbf{x} = (x_1, \ldots, x_d) \in
(0,\infty)^d$ (allowing some of the components of $\bx$ be infinite
only leads to a reduction in the dimension), and
 and $\mathbf{a} = (a_1, \ldots, a_d)  \in [0,\infty)^d$.

Since $F_i \in \mathscr{L}$, $i=1,\ldots, d$, for sufficiently large
$t$ we have
\begin{align*}
0 \leq \overline{F_i}(tx_i - a_i) - \overline{F_i}(tx_i) < \epsilon \overline{F_i}(tx_i)
\end{align*}
for $i = 1, \ldots, d$. Further, it is clear that
\begin{align*}
0 \leq \overline{F}(t \mathbf{x} - \mathbf{a}) - \overline{F}(t
  \mathbf{x}) \leq
\sum_{i=1}^d \left( \overline{F_i}(tx_i - a_i) - \overline{F_i}(tx_i)
  \right).
\end{align*}

Hence for sufficiently large $t$,
\begin{align*}
0 \leq \frac{\overline{F}(t \mathbf{x} - \mathbf{a}) - \overline{F}(t
  \mathbf{x})}{\overline{F}(t \mathbf{x})}
& \leq \sum_{i=1}^d\frac{\left(\overline{F_i}(tx_i -a_i)
-\overline{F_i}(tx_i)\right)}{\overline{F}(t \mathbf{x})} \\
& \leq   \sum_{i=1}^d\frac{\left(\overline{F_i}(tx_i -a_i)
-\overline{F_i}(tx_i)\right)}{\overline{F_i}(tx_i)} \\
& < d \epsilon.
\end{align*}
Letting $\epsilon \to 0$ gives the desired result.
\end{proof}

\begin{remark}
It is worth noting that the above statement and Corollary 11 in
\cite{omey:2006} show that for any probability distribution $F$
supported by the positive quadrant in $\bbr^d$, such that the marginal
distribution $F_i$ of $F$ is subexponential for every $i=1,\ldots, d$,
we have, for all $\mathbf{a}\in [0,\infty)^d$,
$\mathbf{x} \in (0,\infty)^d$ and  $n\geq 1$,
\begin{align} \label{e:omey.n}
\lim_{t \to \infty} \frac{\overline{F^{*n}}(t \mathbf{x} - \mathbf{a})}{\overline{F}(t \mathbf{x})} = n.
\end{align}
 \end{remark}

Using \eqref{e:omey.S} as a definition of multivariate
subexponentiality is, therefore, equivalent to merely requiring
one-dimensional subexponentiality for each marginal distribution. Such
requirement, in particular, cannot guarantee one-dimensional
subexponentiality of the linear combinations, as we have seen in
Example \ref{crnonlin}. In fact, it was shown in \cite{leslie:1989}
that even the sum of independent random variables with subexponential
distributions does not need to have a subexponential distribution.

\section{Multivariate Subexponential Distributions}
\label{sec:our.def}

In this section we introduce a new notion of a multivariate
subexponential distribution. We approach the task with the
multivariate ruin problem in mind. We start with a family
$\mathcal{R}$  of open sets in $\mathbb{R}^d$. Recall that a subset
$A$ of $\bbr^d$ is increasing if $\bx\in A$ and $\ba \in [0,\infty)^d$
imply $\bx+\ba\in A$. Let
\begin{align} \label{sets}
\mathcal{R} = \{ A \subset \mathbb{R}^d:\,
  A~\text{open, increasing},~A^c~\text{convex},~\mathbf{0} \notin
  \overline{A} \}.
\end{align}

\begin{remark} \label{setbasic}
Note that $\calR$ is a cone with respect to the multiplication by
positive scalars. That is,
if $A\in\mathcal{R}$, then $uA \in \mathcal{R}$ for any $u >
0$. Further, half-spaces of the form
\begin{equation} \label{e:H}
H = \left\{ \mathbf{x}:\,  a_1 x_1 + \dots + a_d x_d > b   \right\},
\ b>0, \, a_1,  \dots, a_d \geq 0 \  \text{with} \ a_1+\ldots+a_d=1
\end{equation}
are members of $\mathcal{R}$.
\end{remark}

\begin{remark} \label{trans}
We can write a set $A \in \mathcal{R}$ (in a non-unique way) as $A =
\mathbf{b} + G$, with $\mathbf{b} \in (0,\infty)^d$ and $\mathbf{0}
\in \partial G$ (with $\partial G$ being the boundary of
$G$.) It is clear that the set $G$ is then also increasing. We will
adopt this notation in some of the proofs to follow.
\end{remark}

To see a connection with the multivariate ruin problem, imagine that
for a
fixed set $A\in\calR$ we view $A$ as the ``ruin set'' in the sense
that if, at any time, the excess of claim amounts over the premia
falls in $A$, then the insurance company is ruined. Note that, in the
one-dimensional situation, all sets in $\calR$ are of the form
$A=(u,\infty)$ with $u>0$, so the ruin corresponds to the  excess of
claim amounts over the premia being over the initial capital $u$. The
different shapes of sets in $\calR$ can be viewed as allowing
different interactions between multiple lines of business. For
example, choosing $A$ of the form
$$
A=\left\{ \bx:\, x_i>u_i \ \text{for some $i=1,\ldots,d$}\right\}, \ \
  u_1,\ldots, u_d>0
$$
corresponds to completely separate lines of business, where a ruin of
one line of business causes the ruin of the company. On the other
hand, using as $A$ a half-space of the form \eqref{e:H} corresponds
to the situation where there is a single overall initial capital $b$
and the proportion of $a_i$ in a shortfall in the $i$th line of
business is charged to the overall capital $b$. The connections to the
ruin problem are discussed more thoroughly in Section \ref{sec:ruin}.

\medskip

Before we introduce our notion of multivariate subexponentiality, we
collect, in the following lemma, certain facts about the family $\calR$.

\begin{lemma} \label{basic}
Let $A \in \mathcal{R}$.
\begin{enumerate}
    \item [(a)] If $G= A-\mathbf{b}$  for some $\bb\in  \partial A$, then
      $G^c\supset (-\infty,0]^d$.
    \item [(b)] If $u_1 > u_2 > 0$ then $u_1 A \subset u_2 A$.
    \item  [(c)] There is a set of vectors $I_A \subset \mathbb{R}^d$ such
      that
$$
A=\left\{ \bx\in\bbr^d:\, \bp^T\bx>1 \ \text{for some}\,  \ \bp\in
  I_A\right\}\,.
$$
\end{enumerate}
\end{lemma}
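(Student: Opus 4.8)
The plan is to dispatch the three parts in order, using only that $A$ is open and increasing, that $A^c$ is convex, and that $\mathbf{0}\notin\overline{A}$; the last hypothesis, together with openness of $A$, already gives $\mathbf{0}\in A^c$.

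For (a), fix $\mathbf{b}\in\partial A$. Since $A$ is open, $\partial A$ is disjoint from $A$, so $\mathbf{b}\notin A$, and the claim $G^c\supset(-\infty,0]^d$ amounts to showing $\mathbf{b}-\mathbf{a}\notin A$ for every $\mathbf{a}\in[0,\infty)^d$. This I would get by contradiction: if $\mathbf{b}-\mathbf{a}\in A$, then since $A$ is increasing, $(\mathbf{b}-\mathbf{a})+\mathbf{a}=\mathbf{b}\in A$, contradicting $\mathbf{b}\notin A$. For (b), it suffices to prove $\lambda A\subset A$ whenever $\lambda>1$, since then $u_1A=u_2\bigl((u_1/u_2)A\bigr)\subset u_2A$ for $u_1>u_2>0$. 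Given $\mathbf{x}\in A$, suppose $\lambda\mathbf{x}\in A^c$; as $\mathbf{0}\in A^c$ and $A^c$ is convex, the segment from $\mathbf{0}$ to $\lambda\mathbf{x}$ lies in $A^c$, and in particular $\mathbf{x}=\tfrac1\lambda(\lambda\mathbf{x})+(1-\tfrac1\lambda)\mathbf{0}\in A^c$, a contradiction.

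For (c), note that openness of $A$ makes $A^c$ a closed convex set. If $A=\emptyset$ take $I_A=\emptyset$; otherwise $A^c$ is a nonempty proper closed convex subset of $\mathbb{R}^d$, hence equals the intersection of all closed half-spaces containing it, so $A$ is a union of open half-spaces $H_\alpha=\{\mathbf{x}:\mathbf{q}_\alpha^T\mathbf{x}>c_\alpha\}$ with $\mathbf{q}_\alpha\neq\mathbf{0}$. Each $H_\alpha$ is disjoint from $A^c$, hence $H_\alpha\subset A$ and $\overline{H_\alpha}=\{\mathbf{x}:\mathbf{q}_\alpha^T\mathbf{x}\geq c_\alpha\}\subset\overline{A}$; since $\mathbf{0}\notin\overline{A}$ this forces $c_\alpha>0$, so writing $\mathbf{p}_\alpha=\mathbf{q}_\alpha/c_\alpha$ gives $H_\alpha=\{\mathbf{x}:\mathbf{p}_\alpha^T\mathbf{x}>1\}$, and $I_A=\{\mathbf{p}_\alpha\}$ works.

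I expect the only point requiring care to be the strict inequality $c_\alpha>0$ in part (c): this is precisely where $\mathbf{0}\notin\overline{A}$ is used in full strength, since the weaker statement $\mathbf{0}\notin A$ would only yield $c_\alpha\geq0$, which does not permit normalizing the defining inequality to ``$\mathbf{p}^T\mathbf{x}>1$''. Parts (a) and (b) are routine once reformulated as above.
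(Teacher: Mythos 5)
Your proof is correct. Parts~(a) and~(b) argue exactly as the paper does (which disposes of them in one line each), so there is nothing to compare there beyond the level of detail. For part~(c) you also rest on the representation of a closed convex set as the intersection of the closed half-spaces containing it, which is the same Rockafellar corollary the paper cites, but you arrive at the crucial positivity of the constant by a different and in fact cleaner route. The paper works boundary point by boundary point: it takes a supporting hyperplane $\mathbf{p}_{\mathbf{x}_0}$ at each $\mathbf{x}_0 \in \partial A$, gets $\mathbf{p}_{\mathbf{x}_0}^T\mathbf{x}_0 \geq 0$ from $\mathbf{0}\in A^c$, and then dismisses the case $\mathbf{p}_{\mathbf{x}_0}^T\mathbf{x}_0 = 0$ ``since $A$ is increasing.'' You instead observe that each open half-space $H_\alpha$ in the representation satisfies $\overline{H_\alpha}\subset\overline{A}$, so $\mathbf{0}\notin\overline{A}$ forces $c_\alpha>0$ directly. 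Your diagnosis at the end is on the mark: ``increasing'' together with $\mathbf{0}\in A^c$ alone does not rule out a zero constant --- the set $A=\{(x,y):x+y>0\}$ is open, increasing, with convex complement and $\mathbf{0}\in A^c$, yet its supporting hyperplane at $\mathbf{0}\in\partial A$ has constant zero --- so the full hypothesis $\mathbf{0}\notin\overline{A}$ is what is really doing the work, and your argument makes that dependence explicit where the paper leaves it implicit. The one bookkeeping item you handle correctly and the paper does not mention is the degenerate possibility $A=\emptyset$, for which the representation theorem does not apply but $I_A=\emptyset$ works.
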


\begin{proof}
(a)  Since $G^c$ is closed, it contains the origin. Since $G$ is
increasing, $G^c$ contains the entire quadrant $(-\infty,0]^d$.

(b) This is an immediate consequence of the fact that $A^c$ is convex and $\mathbf{0} \in A^c$.

(c) Let $\mathbf{x}_0 \in \partial A$. Since  $A^c$ is convex, the
supporting hyperplane theorem (see  e.g. Corollary 11.6.2 in
\cite{rockafellar:2015})  tells us that there exists a (not
necessarily unique)  nonzero vector $\mathbf{p}_{\mathbf{x}_0}$ such
that $\mathbf{p}_{\mathbf{x}_0}^T \mathbf{x} \leq
\mathbf{p}_{\mathbf{x}_0}^T \mathbf{x}_0$ for all $\mathbf{x} \in
A^c$. Since ${\bf 0}\in A^c$, we must have
$\mathbf{p}_{\mathbf{x}_0}^T \mathbf{x}_0\geq 0$. Since $A$ is
increasing, the case $\mathbf{p}_{\mathbf{x}_0}^T \mathbf{x}_0= 0$ is
impossible, so $\mathbf{p}_{\mathbf{x}_0}^T \mathbf{x}_0>0$.

We scale each $\mathbf{p}_{\mathbf{x}_0}$ so that
$\mathbf{p}_{\mathbf{x}_0}^T \mathbf{x}_0 = 1$. Let $I_A$ be the set
of all such $\mathbf{p}_{\mathbf{x}_0}$ for all $\mathbf{x}_0
\in \partial A$. Since a closed convex set equals the intersection of
the half-spaces bounded by
its supporting hyperplanes (see  e.g. Corollary 11.5.1 in
\cite{rockafellar:2015}), the collection $I_A$ has the required
properties.
\end{proof}

\begin{remark} \label{scale}
It is clear that, once we have chosen a collection $I_A$ for some
$A\in\calR$, for any $u>0$ we can use $I_A/u$ as $I_{uA}$.
\end{remark}

We are now ready to define multivariate subexponentiality.
Let $F$ be a probability distribution on
$\mathbb{R}^d$ supported by $[0,\infty)^d$. For a fixed $A \in
\mathcal{R}$ it follows from part (b) of Lemma \ref{basic} that the
function on $[0,\infty)$ defined by
$$
F_A(t) = 1-F(tA), \ t\geq 0\,,
$$
is a probability distribution function on $[0,\infty)$.

\begin{definition} \label{multsubdef}
For any $A \in \mathcal{R}$, we say that $F \in \mathscr{S}_A$ if $F_A
\in \mathscr{S}$, and we write $\mathscr{S}_\mathcal{R} := \cap_{A \in
  \mathcal{R}} \mathscr{S}_A$.
\end{definition}

We view the class $\scrS_\calR$ as the class of subexponential
distributions. However, for some applications we can use a larger
class, such as $\scrS_A$ for a fixed $A\in\calR$, or the intersection
of such classes over a subset of $\calR$.

\medskip

Note that by Remark \ref{setbasic}, if $\mathbf{X}$ is a random vector
in $\bbr^d$ whose distribution is in $\mathscr{S}_\mathcal{R}$, then
all non-degenerate linear combinations of the components of
$\mathbf{X}$ with
nonnegative coefficients have one-dimensional subexponential
distributions. More generally, we have the following stability
property. We say that a linear transformation $T:\, \bbr^d\to\bbr^k$
is increasing if $T\bx\in [0,\infty)^k$ for any $\bx\in [0,\infty)^d$.
\begin{proposition} \label{pr:stability}
Let $T:\, \bbr^d\to\bbr^k$ be a linear increasing transformation. If
$\BX$ is a random vector in $\bbr^d$ whose distribution is in
$\mathscr{S}_\mathcal{R}$ (in $\bbr^d$), then the same is true (in
$\bbr^k$) for the distribution of the random vector $T\BX$.
\end{proposition}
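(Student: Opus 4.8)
The plan is to show that membership in $\mathscr{S}_{\mathcal{R}}$ is a statement about a whole family of one-dimensional projections, and that an increasing linear map simply relabels which projections one looks at. Concretely, recall from part (c) of Lemma \ref{basic} that for each $A\in\mathcal{R}$ (in $\mathbb{R}^d$) there is a set $I_A\subset\mathbb{R}^d$ with $A=\{\bx:\, \bp^T\bx>1\ \text{for some}\ \bp\in I_A\}$, and that the supporting vectors $\bp$ can be chosen with nonnegative coordinates whenever $A$ is increasing — indeed the supporting hyperplane argument in Lemma \ref{basic}(c), combined with the fact that $A$ is increasing, forces $\bp^T\be_i\ge 0$ for each standard basis vector $\be_i$, so $I_A\subset[0,\infty)^d\setminus\{\mathbf 0\}$. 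The first step is therefore to record that, since $\BX$ is supported on $[0,\infty)^d$, for each such $\bp$ the scalar $\bp^T\BX$ is a nonnegative random variable, and $\{\BX\in tA\}=\{\bp^T\BX>t\ \text{for some}\ \bp\in I_A\}$; thus $F_A(t)=1-F(tA)$ is the distribution function of the nonnegative random variable $M_A:=\sup_{\bp\in I_A}\bp^T\BX$, and $F\in\mathscr{S}_A$ exactly when $M_A\in\mathscr{S}$.

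Next I would transport the set family under $T$. Let $B\in\mathcal{R}$ be a set in $\mathbb{R}^k$; we must show $T\BX\in tB$ for large $t$ has a subexponential governing distribution. Write $B=\{\by\in\mathbb{R}^k:\, \bq^T\by>1\ \text{for some}\ \bq\in I_B\}$ with $I_B\subset[0,\infty)^k\setminus\{\mathbf 0\}$. Then $\{T\BX\in tB\}=\{\bq^T(T\BX)>t\ \text{for some}\ \bq\in I_B\}=\{(T^{\!*}\bq)^T\BX>t\ \text{for some}\ \bq\in I_B\}$, where $T^{\!*}:\mathbb{R}^k\to\mathbb{R}^d$ is the adjoint. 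The key point is that since $T$ is increasing, $T^{\!*}$ maps $[0,\infty)^k$ into $[0,\infty)^d$: for $\bq\ge\mathbf 0$ and any standard basis vector $\be_i\in\mathbb{R}^d$ we have $(T^{\!*}\bq)_i=\bq^T(T\be_i)\ge 0$ because $T\be_i\in[0,\infty)^k$. Hence $\{T^{\!*}\bq:\,\bq\in I_B\}\subset[0,\infty)^d$. Setting $A':=\{\bx\in\mathbb{R}^d:\, \bp^T\bx>1\ \text{for some}\ \bp\in T^{\!*}(I_B)\}$, one checks routinely that $A'$ is open, increasing (the $\bp$'s are nonnegative), has convex complement (it is a union of open half-spaces, so the complement is an intersection of closed half-spaces), and does not contain $\mathbf 0$ in its closure — provided we first discard any $\bq$ with $T^{\!*}\bq=\mathbf 0$, which contribute nothing to the union. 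So $A'\in\mathcal{R}$, and by construction $\{T\BX\in tB\}=\{\BX\in tA'\}$ for every $t>0$, whence $(TF)_B(t)=1-TF(tB)=1-F(tA')=F_{A'}(t)$. Since $F\in\mathscr{S}_{\mathcal{R}}$ gives $F_{A'}\in\mathscr{S}$, we get $(TF)_B\in\mathscr{S}$, and since $B\in\mathcal{R}$ was arbitrary, $TF\in\mathscr{S}_{\mathcal{R}}$.

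The routine-but-not-vacuous verifications are the two "increasing" facts ($I_A$ can be taken nonnegative, and $T^{\!*}$ preserves the nonnegative orthant), together with checking that the transported family $A'$ really lies in $\mathcal{R}$ — in particular that $\mathbf 0\notin\overline{A'}$, which holds because each half-space $\{\bp^T\bx>1\}$ with $\bp\in[0,\infty)^d\setminus\{\mathbf 0\}$ stays a fixed positive distance from the origin, uniformly is not needed since we only need $\mathbf 0$ outside the closure of a single set $A'$ and $\overline{A'}\subset\{\bx:\,\bp^T\bx\ge 1\ \text{for some}\ \bp\}$... here one must be slightly careful, and the cleanest route is to invoke Lemma \ref{basic}(c) in the reverse direction or to note $A'$ is increasing with convex complement containing a neighborhood of $\mathbf 0$ because $A^c=B^c$ pulled back is closed and contains $\mathbf 0$ in its interior iff $B^c$ does; since $\mathbf 0\notin\overline B$, $B^c$ is a closed convex set with $\mathbf 0$ in its interior, and its preimage under the continuous map $\by\mapsto$ (the relevant affine conditions) inherits this. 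I expect this last point — confirming $\mathbf 0\notin\overline{A'}$ cleanly — to be the only place requiring genuine care; everything else is a bookkeeping exercise in adjoints and cones.
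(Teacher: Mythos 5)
Your construction is, in substance, the same as the paper's: the set $A'$ you build from $T^{*}(I_B)$ is nothing other than the preimage $T^{-1}B$, since $(T^{*}\mathbf{q})^{T}\bx>1$ for some $\mathbf{q}\in I_B$ exactly when $T\bx\in B$, and the paper's proof consists precisely of the observation that $T^{-1}B\in\calR$ (in $\bbr^d$) whenever $B\in\calR$ (in $\bbr^k$), so that $(TF)_B=F_{T^{-1}B}\in\mathscr{S}$. The detour through Lemma \ref{basic}(c) and the adjoint buys you increasingness and convexity of the complement for free, but it makes the one genuinely delicate point, $\mathbf{0}\notin\overline{A'}$, harder than it needs to be, and your handling of it is the weak spot: the fact that each individual half-space $\{\bx:\,\bp^T\bx>1\}$ avoids a neighbourhood of the origin does not suffice, because its distance from $\mathbf{0}$ is $1/\|\bp\|$ and a union over an unbounded family of $\bp$'s can have $\mathbf{0}$ in its closure (for instance $I=\{n\mathbf{e}^{(1)}:\,n\geq 1\}$ yields the union $\{x_1>0\}$). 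The fix you gesture at in the last sentences is the correct one, and it is exactly the direct argument that renders the adjoint machinery unnecessary: since $\mathbf{0}\notin\overline{B}$, the convex set $B^c$ contains a neighbourhood of the origin in $\bbr^k$, hence $(T^{-1}B)^c=T^{-1}(B^c)$ contains a neighbourhood of the origin in $\bbr^d$ by continuity of $T$; openness, increasingness (from $T$ increasing together with $B$ increasing) and convexity of the complement (preimage of a convex set under a linear map) are equally immediate for $T^{-1}B$, and then $\{T\BX\in tB\}=\{\BX\in t\,T^{-1}B\}$ finishes the proof as you say. Your side remark that $I_B$ can be taken in $[0,\infty)^k\setminus\{\mathbf{0}\}$ is correct (the supporting functional at $\bx_0$ must be nonnegative because the decreasing set $B^c$ contains $\bx_0-t\mathbf{e}^{(i)}$ for all $t\geq 0$), but it becomes superfluous once you argue on $T^{-1}B$ directly.
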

\begin{proof}
The statement follows from the easily checked fact that for any
$A\in\calR$ in $\bbr^k$, the set $T^{-1}A$ is in $\calR$ in $\bbr^d$.
\end{proof}

The following lemma is useful. Its argument uses the fact that for any
random vector $X$ on $\mathbb{R}^d$ and $A \in \mathcal{R}$, we can
write, for any $u > 0$, the event
$\left\{ \mathbf{X} \in uA \right\}$ as $\left\{ \sup_{\mathbf{p} \in
    I_A} \mathbf{p}^T \mathbf{X} > u \right\}$; see Lemma \ref{basic}
and Remark \ref{scale}.

\begin{lemma} \label{upbound}
For any $A \in \mathcal{R}$ and $n \geq 1$,
\begin{align}
\overline{(F_A)^{*n}}(t) \geq F^{*n}(tA).
\end{align}
\end{lemma}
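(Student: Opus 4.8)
The plan is to unwind what each side of the inequality means in terms of the random vectors. Let $\mathbf X_1,\dots,\mathbf X_n$ be i.i.d.\ with law $F$. The right-hand side is $F^{*n}(tA) = P\bigl(\mathbf X_1+\dots+\mathbf X_n \in tA\bigr)$. For the left-hand side, $F_A$ is the law of the one-dimensional random variable $Y_1 := \sup_{\mathbf p\in I_A}\mathbf p^T\mathbf X_1$ (using Lemma \ref{basic}(c) and Remark \ref{scale}, since $\{\mathbf X_1\in tA\} = \{Y_1 > t\}$), and if $Y_1,\dots,Y_n$ are the corresponding i.i.d.\ copies built from $\mathbf X_1,\dots,\mathbf X_n$, then $\overline{(F_A)^{*n}}(t) = P\bigl(Y_1+\dots+Y_n > t\bigr)$. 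So the claim reduces to
\[
P\bigl(Y_1+\dots+Y_n > t\bigr) \;\geq\; P\bigl(\mathbf X_1+\dots+\mathbf X_n\in tA\bigr).
\]

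**The key inequality.** It suffices to show the event on the right is contained in the event on the left, i.e.\ that $\mathbf X_1+\dots+\mathbf X_n\in tA$ implies $Y_1+\dots+Y_n>t$. By Lemma \ref{basic}(c), $\mathbf X_1+\dots+\mathbf X_n\in tA$ means there exists $\mathbf p\in I_A$ with $\mathbf p^T(\mathbf X_1+\dots+\mathbf X_n) > t$, i.e.\ $\sum_{j=1}^n \mathbf p^T\mathbf X_j > t$. Since for each $j$ we have $\mathbf p^T\mathbf X_j \leq \sup_{\mathbf q\in I_A}\mathbf q^T\mathbf X_j = Y_j$, summing gives $\sum_{j=1}^n Y_j \geq \sum_{j=1}^n \mathbf p^T\mathbf X_j > t$, which is exactly $Y_1+\dots+Y_n>t$. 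Hence the event inclusion holds pointwise, and taking probabilities yields the lemma.

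**The point to be careful about.** The only genuine obstacle is the measurability and well-definedness of $Y_1 = \sup_{\mathbf p\in I_A}\mathbf p^T\mathbf X_1$: in general $I_A$ is an uncountable set, so one should note that $\{Y_1>t\} = \{\mathbf X_1\in tA\}$ is a Borel event because $tA$ is open (this is the content of the identity quoted before the lemma), and more generally $\{Y_1 > s\} = \{\mathbf X_1 \in sA\}$ for every $s>0$ by Remark \ref{scale}, with $\{Y_1>0\} = \{\mathbf X_1 \in \overline A^{\,c}\}^c$-type considerations handled by the fact that $\mathbf X_1 \in [0,\infty)^d$ and $A$ is increasing. This shows $Y_1$ is a bona fide nonnegative random variable with distribution function $F_A$, so the convolution $\overline{(F_A)^{*n}}$ on the left is legitimately $P(Y_1+\dots+Y_n>t)$ for i.i.d.\ copies. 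Once that is in place, the argument above is a one-line pointwise domination. I would present it in essentially this order: (1) identify $Y_1,\dots,Y_n$ and record $\overline{(F_A)^{*n}}(t)=P(\sum Y_j>t)$ and $F^{*n}(tA)=P(\sum\mathbf X_j\in tA)$; (2) use Lemma \ref{basic}(c) to extract a witnessing $\mathbf p\in I_A$ on the event $\{\sum\mathbf X_j\in tA\}$; (3) bound $\sum\mathbf p^T\mathbf X_j \leq \sum Y_j$ to conclude the event inclusion; (4) take probabilities.
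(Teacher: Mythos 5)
Your argument is correct and follows essentially the same route as the paper: identify $F_A$ as the law of $Y_i=\sup_{\mathbf p\in I_A}\mathbf p^T\mathbf X^{(i)}$ via Lemma \ref{basic}(c) and Remark \ref{scale}, then use the pointwise bound $\sup_{\mathbf p\in I_A}\mathbf p^T(\mathbf X^{(1)}+\dots+\mathbf X^{(n)})\le Y_1+\dots+Y_n$ to get the event inclusion and take probabilities. Your added remark on the measurability of $Y_i$, justified by $\{Y_i>s\}=\{\mathbf X^{(i)}\in sA\}$, is a fine supplementary observation but does not change the substance.
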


\begin{proof}
Let $\mathbf{X}^{(1)}, \dots, \mathbf{X}^{(n)}$ be independent random
vectors with distribution $F$. Let $Y_1, \dots, Y_n$ be
one-dimensional random variables defined by
\begin{align*}
Y_i = \sup \{ u:\,  \mathbf{X}^{(i)}\in uA \} = \sup_{\mathbf{p} \in I_A}
  \mathbf{p}^T \mathbf{X}^{(i)},  \ i=1,\ldots, d\,,
\end{align*}
see Remark \ref{scale}.
Note that by part (b) of Lemma \ref{basic},
\begin{align*}
P(Y_i > t) & = P( \mathbf{X}^{(i)} \in tA) \\
& = F(tA) = \overline{F_A}(t).
\end{align*}

Hence it follows that
\begin{align*}
F^{*n}(tA) & = P(\mathbf{X}^{(1)} + \dots + \mathbf{X}^{(n)} \in tA) \\
& = P\bigl(\sup_{\mathbf{p} \in I_A} \mathbf{p}^T (\mathbf{X}^{(1)} + \dots +
  \mathbf{X}^{(n)}) > t\bigr) \\
&\leq  P\bigl(\sup_{\mathbf{p} \in I_A} \mathbf{p}^T \mathbf{X}^{(1)}
  + \dots +\sup_{\mathbf{p} \in I_A} \mathbf{p}^T \mathbf{X}^{(n)}>
  t\bigr) \\
&= P(Y_1+\ldots +Y_n>t) \\
& = \overline{(F_A)^{*n}}(t)\,,
\end{align*}
as required.
\end{proof}

In spite of this lemma, the two probabilities are asymptotically
equivalent.
\begin{corollary} \label{convtail}
$A \in \mathcal{R}$. Let $\mathbf{X}, \mathbf{X}^{(1)}, \dots,
\mathbf{X}^{(n)}$ be independent random vectors with distribution
$F$. If $F \in \mathscr{S}_A$ for some $A \in \mathcal{R}$,
then for all $n\geq 1$,
\begin{align} \label{e:weaker.ass}
\lim_{u \to \infty} \frac{P(\mathbf{X}^{(1)} + \dots +
  \mathbf{X}^{(n)} \in uA)}{P(\mathbf{X} \in uA)} = n.
\end{align}
\end{corollary}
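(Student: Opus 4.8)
The plan is to deduce \eqref{e:weaker.ass} from the one-dimensional subexponentiality of $F_A$ by combining the two inequalities already available to us. On one side, Lemma \ref{upbound} gives $\overline{(F_A)^{*n}}(u)\ge F^{*n}(uA)$, so
\begin{align*}
\limsup_{u\to\infty}\frac{P(\mathbf{X}^{(1)}+\dots+\mathbf{X}^{(n)}\in uA)}{P(\mathbf{X}\in uA)}
\le \limsup_{u\to\infty}\frac{\overline{(F_A)^{*n}}(u)}{\overline{F_A}(u)}=n,
\end{align*}
where the last equality is precisely \eqref{e:nterms} applied to the one-dimensional subexponential distribution $F_A$ (recall $P(\mathbf{X}\in uA)=\overline{F_A}(u)$ by part (b) of Lemma \ref{basic}). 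So the upper bound is immediate.

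For the matching lower bound the natural route is a ``single large jump'' argument directly on the sets $uA$. Writing $S_n=\mathbf{X}^{(1)}+\dots+\mathbf{X}^{(n)}$, one notes that since $A$ is increasing and $\mathbf{0}\in\overline{A}$ is excluded, there is a constant $c>0$ with $A\subset\{\mathbf{x}:\ \|\mathbf{x}\|_\infty>c\}$ say, and more importantly each $\mathbf{X}^{(i)}\ge\mathbf{0}$, so $\{\mathbf{X}^{(j)}\in uA\}\subset\{S_n\in uA\}$ because adding nonnegative vectors keeps one inside an increasing set. Hence by inclusion--exclusion (Bonferroni)
\begin{align*}
P(S_n\in uA)\ge \sum_{j=1}^n P(\mathbf{X}^{(j)}\in uA)-\sum_{i<j}P(\mathbf{X}^{(i)}\in uA,\ \mathbf{X}^{(j)}\in uA),
\end{align*}
and since the $\mathbf{X}^{(i)}$ are i.i.d.\ the subtracted term is $\binom{n}{2}\overline{F_A}(u)^2=o(\overline{F_A}(u))$. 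Dividing by $\overline{F_A}(u)=P(\mathbf{X}\in uA)$ and letting $u\to\infty$ gives $\liminf\ge n$. This already closes the argument, and in fact does not even use subexponentiality for the lower bound, only nonnegativity of the coordinates; subexponentiality enters only through the upper bound via Lemma \ref{upbound} and \eqref{e:nterms}.

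The step I expect to require the most care is verifying that the ``trivial'' inclusion $\{\mathbf{X}^{(j)}\in uA\}\subset\{S_n\in uA\}$ is correctly justified and that the Bonferroni bound is applied to the right events; this is where the hypotheses that $F$ is supported on $[0,\infty)^d$ and that $A$ is increasing are genuinely used, and one must be careful that the other summands contribute nonnegative vectors so the sum stays in $A$. An alternative, perhaps cleaner, route for the lower bound is to pass through the random variables $Y_i=\sup_{\mathbf{p}\in I_A}\mathbf{p}^T\mathbf{X}^{(i)}$ of Lemma \ref{upbound}: one has $\{S_n\in uA\}=\{\sup_{\mathbf{p}}\mathbf{p}^TS_n>u\}\supset\{\max_i Y_i>u\}$ (since some single $\mathbf{X}^{(i)}$ lying in $uA$ forces $S_n\in uA$), and $P(\max_i Y_i>u)\ge n\overline{F_A}(u)-\binom n2\overline{F_A}(u)^2$, again by Bonferroni, using $P(Y_i>u)=\overline{F_A}(u)$. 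Either way the combination of the two bounds yields the limit $n$, and I would present the Bonferroni lower bound together with the Lemma \ref{upbound}/\eqref{e:nterms} upper bound as the two halves of the proof.
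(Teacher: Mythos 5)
Your proposal is correct and follows essentially the same route as the paper: the upper bound comes from Lemma \ref{upbound} together with one-dimensional subexponentiality of $F_A$, and the lower bound rests on the same observation that, by nonnegativity of the summands and monotonicity of $A$, one term landing in $uA$ forces the whole sum into $uA$ (equivalently, the sum in $uA^c$ forces every term into $uA^c$). The only cosmetic difference is that you bound $P\bigl(\bigcup_j\{\mathbf{X}^{(j)}\in uA\}\bigr)$ from below by Bonferroni, whereas the paper computes it exactly as $1-\bigl(1-\overline{F_A}(u)\bigr)^n$; both yield $\liminf\geq n$ without using subexponentiality in that half.
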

\begin{proof}
It follows from Lemma \ref{upbound} that only an asymptotic lower
bound needs to be established. However,
since $\mathbf{X}^{(1)}, \dots, \mathbf{X}^{(n)}$ are all nonnegative,
and $A$ is an increasing set,  it must be that if $\mathbf{X}^{(1)} + \dots
+ \mathbf{X}^{(n)} \in uA^c$, then each $\mathbf{X}^{(1)}, \dots, \mathbf{X}^{(n)} \in
uA^c$. Therefore,
\begin{align*}
P(\mathbf{X}^{(1)} + \dots + \mathbf{X}^{(n)} \in uA^c) & \leq
 P(\mathbf{X}^{(1)}, \dots, \mathbf{X}^{(n)} \in uA^c) \\
& = P(\mathbf{X} \in uA^c)^n.
\end{align*}
It follows that
\begin{align*}
\liminf_{n\to\infty} \frac{P(\mathbf{X}^{(1)} + \dots + \mathbf{X}^{(n)} \in
  uA)}{P(\mathbf{X} \in uA)} & \geq \liminf_{n\to\infty} \frac{1 - P(\mathbf{X} \in
 uA^c)^n}{P(\mathbf{X} \in uA)}  = n\,,
\end{align*}
as required.
\end{proof}

\begin{remark} \label{nonequiv}
We note at this point that the assumption $F \in \mathscr{S}_A$ is NOT
equivalent to the assumption that \eqref{e:weaker.ass} holds for all
$n$. In fact, the latter assumption is weaker. To see that, consider
the following example. Let
$X$ and $Y$ be two independent nonnegative one-dimensional random
variables with subexponential distributions, such that $X + Y$ is not
subexponential; recall that such random variables exist, see
\cite{leslie:1989}. We construct a bivariate random vector
$\mathbf{Z}$ by taking a Bernoulli $(1/2)$ random variable $B$
independent of $X$ and $Y$ ans setting  $\mathbf{Z} = (X, 0)$ if $B=0$
and   $\mathbf{Z} = (0, Y)$ if $B=1$.  Let $A = \{ (x, y):\, \max(x, y) >
1 \}$. Since the marginal distributions of the bivariate distribution
of $\BZ$ are, obviously, subexponential, we see by \eqref{e:omey.n}
that \eqref{e:weaker.ass} holds for all $n \geq
1$. However, for $u>0$,
$$
\overline{F_A}(u) = \frac{1}{2} P(X > u) +
\frac{1}{2} P(Y > u)\,,
$$
so the distribution $F_A$  is a mixture of the distributions of $X$
and $Y$. By Theorem 2 of \cite{embrechts:goldie:1980}, any non-trivial
mixture of the distributions of $X$ and $Y$ is subexponential if any
only if their convolution is. Since, by construction, that convolution
is not subexponential, we conclude that  $F_A \notin \mathscr{S}$ and $F
\notin \mathscr{S}_A$.
\end{remark}

In the next proposition we check that the basic properties of
one-dimensional subexponential distributions extend to the
multivariate case.

\begin{proposition} \label{properties}
Let $A \in \mathcal{R}$ and $F \in \mathscr{S}_A$.
\begin{enumerate}
    \item [(a)] If $G$ is a  distribution on
      $\mathbb{R}^d$ supported by $[0,\infty)^d$, such that
        \begin{align*}
        \lim_{u \to \infty} \frac{F(uA)}{G(uA)} = c > 0,
        \end{align*}
        then $G \in \mathscr{S}_A$.
    \item [(b)] For any $\mathbf{a} \in \mathbb{R}^d$,
        \begin{align}
        \lim_{u \to \infty} \frac{F(uA + \mathbf{a})}{F(uA)} = 1.
        \end{align}
    \item [(c)] Let $\mathbf{X}, \mathbf{X}^{(1)}1, \dots,
      \mathbf{X}^{(n)}$ be
      independent random vectors with distribution $F$. For any
      $\epsilon > 0$, there exists $K > 0$ such that for  all $u > 0$
      and  $n \geq 1$,
        \begin{align}
        \frac{P(\mathbf{X}^{(1)} + \dots + \mathbf{X}^{(n)} \in
          uA)}{P(\mathbf{X} \in uA)} < K (1 + \epsilon)^n \,.
        \end{align}
  \end{enumerate}
\end{proposition}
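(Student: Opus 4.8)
The plan is to reduce all three parts to the corresponding one-dimensional statements about the distribution $F_A$, using the representation
\[
uA=\{\mathbf x\in\mathbb R^{d}:\ \mathbf p^{T}\mathbf x>u\ \text{ for some }\ \mathbf p\in I_A\}
\]
supplied by Lemma \ref{basic}(c) and Remark \ref{scale}.

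Part (a) is immediate once one notices that the hypothesis says precisely that $\overline{F_A}(u)\sim c\,\overline{G_A}(u)$ as $u\to\infty$, where $G_A(t)=1-G(tA)$ is a genuine distribution function by Lemma \ref{basic}(b). Since tail-equivalence preserves membership in the one-dimensional class $\mathscr S$ (a standard fact; cf.\ Section \ref{sec:1dim}) and $F_A\in\mathscr S$, we get $G_A\in\mathscr S$, i.e.\ $G\in\mathscr S_A$. Part (c) is the multivariate version of the classical tail bound for subexponential convolutions. By Lemma \ref{upbound} we have $P(\mathbf X^{(1)}+\dots+\mathbf X^{(n)}\in uA)=F^{*n}(uA)\le \overline{(F_A)^{*n}}(u)$, whereas $P(\mathbf X\in uA)=\overline{F_A}(u)$; since $F_A\in\mathscr S$, the one-dimensional bound gives, for each $\epsilon>0$, a constant $K=K(\epsilon)>0$ with $\overline{(F_A)^{*n}}(u)\le K(1+\epsilon)^{n}\,\overline{F_A}(u)$ for all $u>0$ and $n\ge 1$, and dividing yields the assertion.

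For part (b) the only real work is a geometric observation: the collection $I_A$ may be chosen bounded. Since $A$ is open and $\mathbf 0\notin\overline A$, the number $\delta:=\mathrm{dist}(\mathbf 0,\overline A)$ is strictly positive, and every supporting hyperplane used to construct $I_A$ in the proof of Lemma \ref{basic}(c) stays at distance at least $\delta$ from the origin; hence $\|\mathbf p\|\le 1/\delta$ for all $\mathbf p\in I_A$. Fix $\mathbf a\in\mathbb R^{d}$ and set $c_0:=\sup_{\mathbf p\in I_A}|\mathbf p^{T}\mathbf a|\le\|\mathbf a\|/\delta<\infty$. Since $\mathbf x\in uA+\mathbf a$ if and only if $\mathbf p^{T}\mathbf x>u+\mathbf p^{T}\mathbf a$ for some $\mathbf p\in I_A$, and since $u-c_0\le u+\mathbf p^{T}\mathbf a\le u+c_0$, the representation above yields, for $u>c_0$,
\[
(u+c_0)A\ \subseteq\ uA+\mathbf a\ \subseteq\ (u-c_0)A,
\]
so that $\overline{F_A}(u+c_0)\le F(uA+\mathbf a)\le\overline{F_A}(u-c_0)$. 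Dividing by $F(uA)=\overline{F_A}(u)$ and using that $F_A\in\mathscr S\subseteq\mathscr L$ is long-tailed in the sense of \eqref{e:long.tail}, both bounds tend to $1$, and the claim follows by squeezing.

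The only step I anticipate requiring genuine care is the boundedness of $I_A$ in part (b); parts (a) and (c) are essentially verbatim transfers of one-dimensional results through $F_A$. (Alternatively, (b) could be proved by sandwiching $uA+\mathbf a$ between scalings of $A$ via the decomposition $A=\mathbf b+G$ of Remark \ref{trans}, using that $uG$ is increasing and that the convex sets $uG^{c}$ are nested, but the argument via $I_A$ above seems cleaner.)
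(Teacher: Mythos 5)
Your proof is correct, and for parts (a) and (c) it is exactly the paper's argument: reduce to the one-dimensional distribution $F_A$ and invoke tail-equivalence closure of $\mathscr{S}$ and Kesten's bound, respectively, with Lemma \ref{upbound} supplying the comparison in (c). For part (b) you prove the same sandwich $(u+c_0)A \subseteq uA+\mathbf{a} \subseteq (u-c_0)A$ that drives the paper's proof, but you justify it differently: the paper obtains the inclusions from the decomposition $A=\mathbf{b}+G$ of Remark \ref{trans} (your parenthetical alternative), whereas you derive them from the observation that $I_A$ may be taken bounded, since each normalized supporting functional $\mathbf{p}$ yields a closed half-space $\{\mathbf{p}^T\mathbf{x}\ge 1\}\subseteq\overline{A}$ whose distance $1/\|\mathbf{p}\|$ from the origin is at least $\delta=\mathrm{dist}(\mathbf{0},\overline{A})>0$, so $\|\mathbf{p}\|\le 1/\delta$ and $c_0=\sup_{\mathbf{p}\in I_A}|\mathbf{p}^T\mathbf{a}|<\infty$. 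This is a valid and somewhat more explicit route: it makes the uniform shift constant quantitative ($c_0\le\|\mathbf{a}\|/\delta$) and works directly from the half-space representation of Lemma \ref{basic}(c), while the paper's version is shorter but leaves the existence of the shift $u_1$ to a brief geometric appeal; the two buy essentially the same thing.
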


\begin{proof}
(a) This is an immediate consequence of the univariate
subexponentiality of $F_A$ and the corresponding property of
one-dimensional subexponential distributions; see e.g. Lemma 4 in
\cite{embrechts:goldie:veraverbeke:1979}.

(b) Write $A = \mathbf{b} + G$ as in Remark \ref{trans}. As in part
(b) of Lemma \ref{basic}, we have $u_1 G \subset u_2 G$ if  $u_1 >
u_2 > 0$. Since $G$ is an increasing set, it follows that
there exists $u_1 > 0$ such that for all   $u>u_1$ we have $(u +
u_1)A \subset uA  + \mathbf{a} \subset (u - u_1)A$. Therefore,
\begin{align*}
\overline{F_A}(u + u_1) & = F((u + u_1)A) \\
& \leq F(uA  + \mathbf{a}) \\
& \leq F((u - u_1)A) = \overline{F_A}(u - u_1)\,,
\end{align*}
and the claim follows from the one-dimensional long tail property of
$F_A$.

(c) The claim follows from Lemma \ref{upbound} and the corresponding
one-dimensional bound; see e.g. Lemma 3 in
\cite{embrechts:goldie:veraverbeke:1979}.
\end{proof}

\begin{remark} \label{alldist}
In our Definition \ref{multsubdef} of multivariate subexponentiality
one can drop the assumption that a distribution is supported by
$[0,\infty)^d$.  We can check that both Corollary \ref{convtail} and
Proposition \ref{properties} remain true in this extended case.
 \end{remark}

Our next step is to show that multivariate regular varying
distributions fall within the class $\scrS_\calR$ of multivariate
subexponential distributions. The definition of non-standard
multivariate regular
variation for distribution supported by $[0,\infty)^d$ was given in
\eqref{e:regvar.d}. Presently we would only consider the standard
multivariate regular variation, but allow distributions
not necessarily restricted to the first quadrant. In this case one
assumes that there is a non-zero Radon measure $\mu$  on
$[-\infty,\infty]^d\setminus \{{\bf 0}\}$, charging only finite
points, and a function $b$ on $(0,\infty)$ increasing
to infinity, such that
\begin{equation} \label{e:regvar.d.st}
tF\bigl( b(t)\cdot\bigr)\vconv \mu
\end{equation}
vaguely  on $[-\infty,\infty]^d \setminus \{{\bf 0}\}$.
Recall that the measure $\mu$ is called the tail measure
of $\BX$; it has automatically a scaling property: for some
$\alpha>0$,
  $\mu(uA) = u^{- \alpha} \mu(A)$ for every $u > 0$ and every Borel
  set $A \in \mathbb{R}^d$, and the function $b$ in
  \eqref{e:regvar.d.st} is regularly varying with exponent $1/\alpha$;
  see \cite{resnick:2007}.
We say that $F$ (and $\BX$) are regularly varying
  with exponent $\alpha$ and  use the notation $F\in
  MRV(\alpha,\mu)$.
\begin{proposition} \label{mrvprop}
$MRV(\alpha,\mu)\subset \mathscr{S}_{\mathcal{R}}$.
\end{proposition}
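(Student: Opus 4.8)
The plan is to fix an arbitrary $A\in\calR$ and show that $F\in\scrS_A$, i.e. that the one-dimensional tail $\overline{F_A}(t)=F(tA)$ defines a subexponential distribution on $[0,\infty)$. By Corollary \ref{convtail}, if $F\in\scrS_A$ then \eqref{e:weaker.ass} holds, but the converse fails in general (Remark \ref{nonequiv}), so I cannot argue purely at the level of the set $A$; I must exhibit the subexponentiality of $\overline{F_A}$ itself. The natural route is to first identify the asymptotics of $\overline{F_A}(t)$ in terms of the tail measure $\mu$, and then use the known fact that a distribution whose tail is regularly varying at infinity is subexponential.

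First I would show that $\overline{F_A}$ has a regularly varying tail with index $\alpha$. Using Lemma \ref{basic}(c), write $A=\{\bx:\ \sup_{\bp\in I_A}\bp^T\bx>1\}$, so that $tA=\{\bx:\ \sup_{\bp\in I_A}\bp^T\bx>t\}$ and $\overline{F_A}(t)=F(tA)$. Since $A$ is open and increasing with $\mathbf 0\notin\overline A$, the set $b(s)A$ is bounded away from $\mathbf 0$ and, as $s\to\infty$, the vague convergence \eqref{e:regvar.d.st} applied along the scaled copies of $A$ gives $s\,F(b(s)A)\to\mu(A)$, provided $\mu(\partial A)=0$; because $\mu$ has the scaling property $\mu(uA)=u^{-\alpha}\mu(A)$ and is Radon, $\mu(\partial A)=0$ holds (a scaling-invariant boundary of positive measure would force infinite mass near any sphere). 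One must also check $0<\mu(A)<\infty$: finiteness is immediate since $\overline A$ is bounded away from the origin, and positivity follows because $A$ contains a half-space of the form \eqref{e:H} translated outward, hence a set of the form $\{\bx:\ \bp^T\bx>c\}$ with $c>0$, which has positive $\mu$-mass as it is relatively open and $\mu\neq0$ with the stated scaling (if $\mu$ vanished on all such half-spaces it would vanish identically on $[-\infty,\infty]^d\setminus\{\mathbf 0\}$). Combining $s\,F(b(s)A)\to\mu(A)\in(0,\infty)$ with regular variation of $b$ at index $1/\alpha$ yields, by the standard inversion argument, that $\overline{F_A}(t)=F(tA)$ is regularly varying at infinity with index $-\alpha$.

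Once $\overline{F_A}\in\scrR(\alpha)$ is established, subexponentiality of $F_A$ is immediate from the inclusion $\scrR\subset\scrS$ recorded in Section \ref{sec:1dim} (around \eqref{e:regvar}). Hence $F\in\scrS_A$. Since $A\in\calR$ was arbitrary, $F\in\cap_{A\in\calR}\scrS_A=\scrS_\calR$, which is the desired conclusion; and by Remark \ref{alldist} the argument is unaffected by dropping the restriction to the first quadrant.

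The main obstacle is the clean passage from the vague convergence \eqref{e:regvar.d.st}, which a priori only controls $F$ on $\mu$-continuity sets bounded away from $\mathbf 0$, to a statement about $\overline{F_A}(t)$ for a \emph{fixed} unbounded increasing set $A$: one must verify $\mu(\partial A)=0$ and the finiteness/positivity of $\mu(A)$, and then handle the fact that $tA$ is not relatively compact in $[-\infty,\infty]^d\setminus\{\mathbf 0\}$. The convexity of $A^c$ and the representation in Lemma \ref{basic}(c) are exactly what make these points manageable, since they reduce $A$ to a (possibly infinite) union of open half-spaces through points at distance bounded below, each of which is a well-behaved $\mu$-continuity set; the scaling property of $\mu$ then does the rest.
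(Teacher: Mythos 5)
You follow the same route as the paper: show that every $A\in\calR$ is a $\mu$-continuity set bounded away from the origin, deduce $tF(b(t)A)\to\mu(A)\in(0,\infty)$ from \eqref{e:regvar.d.st}, conclude that $\overline{F_A}$ is regularly varying of index $-\alpha$, hence $F_A\in\scrS$ and $F\in\mathscr{S}_\mathcal{R}$. The gap is at the one nontrivial step, $\mu(\partial A)=0$. Your stated reason (``scaling plus Radon'', a ``scaling-invariant boundary of positive measure would force infinite mass'') does not prove it: $\partial A$ is not scaling invariant, only $\partial(uA)=u\,\partial A$, and the mass argument needs the scaled boundaries $u\,\partial A$, $u>0$, to be pairwise disjoint, so that $\mu(\partial A)>0$ would produce uncountably many disjoint sets, each of measure $u^{-\alpha}\mu(\partial A)>0$, inside a region of finite $\mu$-mass. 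That disjointness is not a generic property of open increasing sets bounded away from $\mathbf 0$: in $\bbr^2$ take $A=\{(x,y):\,y>f(x)\}$ with $f$ continuous, non-increasing, $f\geq 1$ on $(-\infty,0]$, $f(1)=-1$, $f(2)=-2$; then $(1,-1)$ and $(2,-2)=2\,(1,-1)$ both lie on $\partial A$, so $\partial A\cap\partial(2A)\neq\emptyset$. What rules this out for $A\in\calR$ is exactly the convexity of $A^c$, through the representation of Lemma \ref{basic}(c): if $\bx\in\partial(uA)\cap\partial A$ with $u>1$, then $u^{-1}\bx\in\partial A$, so $\bp^T(u^{-1}\bx)=1$ for some $\bp\in I_A$, hence $\bp^T\bx=u>1$, i.e. $\bx\in A$, contradicting $\partial A\subset A^c$. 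This is the argument the paper supplies and your write-up omits; your closing remark that $A$ is a union of half-spaces, each a continuity set, does not repair it, since an uncountable union of $\mu$-continuity sets need not be a $\mu$-continuity set.

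A secondary point: your justification of $\mu(A)>0$ is also incorrect in this setting. Since $F$ is not assumed concentrated on $[0,\infty)^d$, a nonzero $\mu$ can vanish on every half-space $\{\bx:\,\bp^T\bx>c\}$ with $\bp\geq\mathbf 0$, $c>0$ (for instance $\mu$ concentrated on $(-\infty,0]^d$), so ``vanishing on all such half-spaces implies vanishing identically'' is false; and even when $\mu$ charges the positive quadrant one can have $\mu(A)=0$ for some $A\in\calR$ (e.g. $\mu$ concentrated on the positive $x_1$-axis and $A=\{\bx:\,x_2>1\}$). The paper itself simply asserts $\mu(A)\in(0,\infty)$, so you are no worse off than the paper here, but the honest fix is an explicit non-degeneracy condition on $\mu$ (of the kind the paper imposes before Proposition \ref{mrvruin2}), not the argument you give. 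The rest is fine: finiteness of $\mu(A)$ and the harmlessness of the points at infinity follow from $\mathbf 0\notin\overline A$ and the assumption that $\mu$ charges only finite points, and the passage from $tF(b(t)A)\to\mu(A)$ with $b$ regularly varying of index $1/\alpha$ to $\overline{F_A}\in\scrR(\alpha)$, and then to $F_A\in\scrS$, is exactly the paper's conclusion.
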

\begin{proof}
We start by showing that for any $A \in \mathcal{R}$,
$\mu(\partial A) = 0$. Since for any $u>0$,
$$
\mu(\partial (u A)) = \mu(u\partial A) = u^{-\alpha}\mu(\partial A)\,,
$$
 it is enough to show that for any
$u > 1$, $\partial (uA) \cap \partial A = \emptyset$ (indeed,
$\mu(\partial A) > 0$ would then imply existence of uncountably many
disjoint sets of positive measure).

Suppose, to the contrary, that $\partial (uA) \cap \partial A \neq
\emptyset$, and let $\mathbf{x} \in \partial (uA) \cap \partial
A$. The set $I_A$ in part (c) of Lemma \ref{basic}, has, by
construction, the property that $u^{-1}\bx$, as an element of
$u^{-1}\partial (u A)=\partial A$,
satisfies $\mathbf{p}^Tu^{-1}
\mathbf{ x} = 1$ for some $\mathbf{p} \in I_{A}$. But then
$\mathbf{p}^T\mathbf{ x} = u>1$, which says that $\bx$ is in $A$,
rather than in $\partial A$, which is a subset of $A^c$.

It follows from \eqref{e:regvar.d.st} that for any set $A\in \calR$,
\begin{align*}
tP\bigl(\mathbf{X} \in b(t) A\bigr) \to \mu(A) \in (0,\infty)
\end{align*}
as $t\to\infty$. Since the function $b$ is regularly varying with
exponent $1/\alpha$, we immediately conclude that the distribution
function $F_A$ has a regularly varying tail, hence $F_A$ is
subexponential. Because $A \in \mathcal{R}$ is arbitrary, it follows
that $F \in \cap_{A \in \mathcal{R}} \mathscr{S}_A =
\mathscr{S}_\mathcal{R}$.
\end{proof}

We proceed with clarifying the relation between the class
$\mathscr{S}_\mathcal{R}$ we have introduced in this section and the classes
$\mathscr{S}(\nu; \mathbf{b})$ and $S(\bbr^d)$ of Section
\ref{sec:previous}. We will also provide several examples of
distributions that belong to $\mathscr{S}_\mathcal{R}$, as well as
sufficient conditions for a distribution to be a member of
$\mathscr{S}_\mathcal{R}$.

  Example \ref{crnonlin}, combined with Proposition
  \ref{pr:stability},
  show that neither $\mathscr{S}(\nu; \mathbf{b})$ nor $S(\bbr^d)$ are
 subsets of   $\mathscr{S}_\calR$. We will present an example to show
 that  $\mathscr{S}_\calR   \not\subset \mathscr{S}(\nu; \mathbf{b})$.

We start with presenting
  a sufficient condition for a distribution $F$ to be a member of
  $\mathscr{S}_\mathcal{R}$. We assume for the moment that $F$ is
  supported by $[0,\infty)^d$.

Let $\mathbf{X} \sim F$ be a nonnegative random vector on
$\mathbb{R}^d$ such that $P(\mathbf{X} = \mathbf{0}) = 0$. Denote the
$L_1$ norm of $\BX$ by
\begin{align}
W = || \mathbf{X} ||_1 = \sum_{i = 1}^d X_i\,,
\end{align}
and the projection of $\BX$ onto the $d$-dimensional unit simplex
$\Delta_d$ by
\begin{align}
I = \frac{\mathbf{X}}{|| \mathbf{X} ||_1} = \frac{\mathbf{X}}{W} \in
  \Delta_d\,.
\end{align}

Let $\nu$ be the distribution of $I$ over $\Delta_d$, and let
$(F_{\pmb \theta})_{\pmb \theta \in \Delta_d}$ be a set of regular
conditional distributions of $W$ given $I$.
Notice that,  if  the law $F$ of $\mathbf{X}$ in $\mathbb{R}^d$ has a
density $f$ with respect to the $d$-dimensional Lebesgue measure, then
a version of $(F_{\pmb \theta})_{\pmb \theta \in \Delta_d}$ has
densities with respect to the one-dimensional Lebesgue measure, given by
\begin{align} \label{rcddens}
f_{\pmb \theta} (w) = \frac{w^{d - 1} f(w\, \pmb \theta)}{\int_0^\infty
  u^{d - 1} f(u \,\pmb \theta) \, du},~w > 0.
\end{align}

\begin{proposition} \label{sdom}
Suppose $\mathbf{X}$ is a  random vector
on $\mathbb{R}^d$
with distribution $F$, supported by $[0,\infty)^d$,
such that $P ( \mathbf{X} = \mathbf{0}
) = 0$. Suppose the marginal distributions $F_i$, $i=1,\ldots, d$  have
dominated varying tails. Further, assume that there
is a set of regular conditional distributions $(F_{\pmb \theta})_{\pmb
  \theta \in   \Delta_d}$ of $W$ given $I$ such that $F_{\pmb \theta}
\in \mathscr{L}$ for each $\pmb \theta \in
  \Delta_d$  and for some $C, t_0 > 0$,
\begin{align} \label{e:unif.double}
\frac{\overline{F_{\pmb \theta_1}}(2 t)}{\overline{F_{\pmb
  \theta_2}}(t)} \leq C
\end{align}
for all $t > t_0$ and for all $\pmb \theta_1, \pmb \theta_2 \in
\Delta_d$. Then $F \in \mathscr{S}_\mathcal{R}$.
\end{proposition}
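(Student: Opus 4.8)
The plan is to fix an arbitrary $A\in\calR$ and show that the one–dimensional tail $\overline{F_A}$ is both long–tailed and dominatedly varying; since $\scrD\cap\scrL\subset\scrS$ (Section~\ref{sec:1dim} and \cite{goldie:1978}), this gives $F\in\scrS_A$, and as $A$ is arbitrary, $F\in\scrS_{\calR}$.

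First I would record a polar representation of $\overline{F_A}$. By part~(c) of Lemma~\ref{basic} and Remark~\ref{scale}, for every $u>0$ the event $\{\mathbf{X}\in uA\}$ equals $\{\sup_{\bp\in I_A}\bp^T\mathbf{X}>u\}$; writing $\mathbf{X}=WI$ and $g_A(\pmb\theta):=\sup_{\bp\in I_A}\bp^T\pmb\theta$ for $\pmb\theta\in\Delta_d$, and using $W\ge 0$, this event is $\{Wg_A(I)>u\}$, so that
\begin{align} \label{e:polarrep}
\overline{F_A}(u)=\int_{\{g_A>0\}}\overline{F_{\pmb\theta}}\bigl(u/g_A(\pmb\theta)\bigr)\,\nu(d\pmb\theta),\qquad u>0.
\end{align}
Here every $\bp\in I_A$ has nonnegative coordinates (since $A$ is increasing, so $A^c$ is downward closed) and $I_A$ is bounded (since $\mathbf{0}\notin\overline A$ yields a ball $B(\mathbf{0},r)\subset A^c$, whence $r\|\bp\|\le\bp^T(r\bp/\|\bp\|)\le 1$), so $0\le g_A\le\Gamma$ on $\Delta_d$ for a finite $\Gamma=\Gamma(A)$; also $g_A(\pmb\theta)=\theta_i$ when $A$ is the coordinate half–space $\{x_i>1\}$, so \eqref{e:polarrep} specializes to the polar representation of the marginal tail $\overline{F_i}$. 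Fixing once and for all some $\bp^\ast\in I_A$ and, after relabelling, assuming $p^\ast_1>0$, the inclusions $\{x_1>u/p^\ast_1\}\cap[0,\infty)^d\subset uA$ and $uA\cap[0,\infty)^d\subset\{\sum_i\Gamma_ix_i>u\}$ with $\Gamma_i:=\sup_{\bp\in I_A}p_i\le\Gamma$ give the sandwich
\begin{align} \label{e:sandwichA}
\overline{F_1}(u/p^\ast_1)\ \le\ \overline{F_A}(u)\ \le\ \sum_{i=1}^d\overline{F_i}\bigl(u/(d\Gamma_i)\bigr),\qquad u>0,
\end{align}
in particular $\overline{F_A}(u)>0$ for all $u$.

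The crucial ingredient is the uniform control furnished by \eqref{e:unif.double}: taking the supremum over $\pmb\theta_1$ and the infimum over $\pmb\theta_2$ gives $\overline M(2u)\le C\,\underline m(u)$ for all large $u$, where $\overline M(u):=\sup_{\pmb\theta}\overline{F_{\pmb\theta}}(u)$ and $\underline m(u):=\inf_{\pmb\theta}\overline{F_{\pmb\theta}}(u)$; note $\underline m(u)\ge C^{-1}\overline{F_{\pmb\theta_0}}(2u)>0$ for large $u$ and any fixed $\pmb\theta_0$, since each $\overline{F_{\pmb\theta}}$ is positive. Since $F_i\in\scrD$ rules out a degenerate $i$–th coordinate, for each $i$ there is $\epsilon_i\in(0,1)$ with $p_{\epsilon_i}:=\nu(\theta_i>\epsilon_i)>0$, and restricting the polar representation of $\overline{F_i}$ to $\{\theta_i>\epsilon_i\}$ (where $\theta_i\in(\epsilon_i,1]$) yields $p_{\epsilon_i}\underline m(u/\epsilon_i)\le\overline{F_i}(u)\le\overline M(u)$. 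Feeding these bounds into \eqref{e:sandwichA} and using that $F_1\in\scrD$ lets one trade a bounded change of scale for a bounded multiplicative factor (iterate $\overline{F_1}(2v)\ge\gamma_1\overline{F_1}(v)$ a bounded number of times), one obtains that $\overline M$, $\underline m$, every marginal $\overline{F_i}$ and $\overline{F_A}$ are all comparable to $\overline{F_1}$ up to a fixed constant and a fixed change of scale. In particular $\overline{F_A}(2u)/\overline{F_A}(u)$ is bounded away from $0$ for large $u$ — estimate the numerator below and the denominator above by constant multiples of $\overline{F_1}$ at constant multiples of $u$, then absorb the scale mismatch using $F_1\in\scrD$ — so $F_A\in\scrD$.

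It remains to prove $F_A\in\scrL$. Fix $y\in\bbr$; by symmetry take $y>0$. From \eqref{e:polarrep},
\begin{align*}
0\le\overline{F_A}(u-y)-\overline{F_A}(u)=\int_{\{g_A>0\}}\Bigl[\overline{F_{\pmb\theta}}\bigl((u-y)/g_A(\pmb\theta)\bigr)-\overline{F_{\pmb\theta}}\bigl(u/g_A(\pmb\theta)\bigr)\Bigr]\,\nu(d\pmb\theta).
\end{align*}
Divide by $\overline{F_A}(u)$. For each fixed $\pmb\theta$ with $g_A(\pmb\theta)>0$ the resulting integrand tends to $0$: indeed $F_{\pmb\theta}\in\scrL$ (the shift $y/g_A(\pmb\theta)$ being fixed) makes the bracket $o\bigl(\overline{F_{\pmb\theta}}(u/g_A(\pmb\theta))\bigr)$, while $\overline{F_{\pmb\theta}}(u/g_A(\pmb\theta))/\overline{F_A}(u)$ stays bounded. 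For this last bound — the crux — one uses $\overline{F_{\pmb\theta}}(u/g_A(\pmb\theta))\le\overline M(u/\Gamma)$ together with the comparabilities above and $\overline{F_A}(u)\ge\overline{F_1}(u/p^\ast_1)$ from \eqref{e:sandwichA}: both $\overline M(u/\Gamma)$ and $\overline{F_A}(u)$ are constant multiples of $\overline{F_1}$ at constant multiples of $u$, so the ratio is bounded by a constant, uniformly in $\pmb\theta$ and in large $u$. That constant (times $\nu(\Delta_d)=1$) is the dominating function, so dominated convergence gives $\overline{F_A}(u-y)/\overline{F_A}(u)\to 1$ (the case $y<0$ is the same with the roles of $u-y$ and $u$ interchanged). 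Hence $F_A\in\scrD\cap\scrL\subset\scrS$, i.e. $F\in\scrS_A$ for every $A\in\calR$, and therefore $F\in\scrS_{\calR}$. The step I expect to be the main obstacle is precisely this long–tail argument, namely producing the $\nu$–integrable dominating function: pointwise convergence is immediate from $F_{\pmb\theta}\in\scrL$, but the uniform–in–$\pmb\theta$ boundedness of $\overline{F_{\pmb\theta}}(u/g_A(\pmb\theta))/\overline{F_A}(u)$ rests on the somewhat delicate comparability bookkeeping that combines \eqref{e:unif.double}, the dominated variation of the marginals, and the boundedness of $g_A$; once that comparability is in hand, $F_A\in\scrD$ is comparatively routine.
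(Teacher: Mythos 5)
Your proof is correct, and the long-tail step is handled by a genuinely different mechanism than the paper's. The paper establishes $F_A\in\scrL$ via an Egorov-type argument: it fixes $\epsilon>0$, isolates the ``bad set'' $S_{t,\epsilon}\subset\Delta_d$ on which the pointwise ratio exceeds $\epsilon$, shows $\nu(S_{t,\epsilon})<\epsilon$ for large $t$, and then controls the contribution of $S_{t,\epsilon}$ by comparing it, via \eqref{e:unif.double}, against the contribution of a fixed reference set $B$ on which $h_{\pmb\theta}=1/g_A(\pmb\theta)$ is bounded above and below. You instead run a dominated convergence argument: pointwise convergence of the integrand is immediate from $F_{\pmb\theta}\in\scrL$, and the dominating function is literally a constant, obtained by combining the boundedness of $g_A$ (so $\overline{F_{\pmb\theta}}(u/g_A(\pmb\theta))\le\overline M(u/\Gamma)$ uniformly in $\pmb\theta$) with the two-sided comparability, up to a fixed constant and fixed rescaling, of $\overline M$, $\underline m$, every $\overline{F_i}$ and $\overline{F_A}$ to $\overline{F_1}$. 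Establishing that comparability chain is the one substantive piece of work the paper does not spell out; as a by-product it also gives an explicit justification for the claim, stated tersely in the paper's proof, that dominated variation of the marginals forces $F_A\in\scrD$. Both routes use \eqref{e:unif.double} as the essential ingredient; your DCT route buys a cleaner and more self-contained argument at the cost of the bookkeeping needed to set up the comparability, while the paper's route avoids that bookkeeping at the cost of the $\epsilon$-management over $S_{t,\epsilon}$ and $B$. One small point of hygiene: in the upper bound of \eqref{e:sandwichA} you should drop the indices $i$ with $\Gamma_i=0$ (those coordinates do not appear in any $\bp\in I_A$), but this does not affect the argument.
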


\begin{proof}
Let $A \in \mathcal{R}$ be fixed. Since each of the marginal
distributions have
dominated varying tails, it follows that $F_A$ also has a
dominated varying tail. Since $\mathscr{L} \cap \mathscr{D} \subset
\mathscr{S}$, it suffices to show that $F_A \in \mathscr{L}$.

For $\pmb \theta \in \Delta_d$, let
\begin{align} \label{height}
h_{\pmb \theta} = \inf \left\{ w > 0:\,  w \pmb \theta \in A
  \right\}>0\,,
\end{align}
Note that $h_{\pmb \theta}$ is bounded away from 0. Further,
by convexity of $A^c$, $h({\bf e}^{(i)})<\infty$ for at
least one coordinate vector ${\bf e}^{(i)}$, $i=1,\ldots, d$. Since
the dominated variation of the marginal tails implies, in particular,
that each coordinate of the vector $\BX$ is positive with positive
probability, we conclude that
$$
\nu\bigl\{ {\pmb \theta} \in \Delta_d:\, h_{\pmb
  \theta}<\infty\bigr\}>0\,.
$$
We conclude that there is $M>0$ and a measurable set $B\subset
\Delta_d$ with $\delta:= \nu(B)>0$, such that
$$
1/M\leq h_{\pmb \theta}\leq M\ \ \text{for all ${\pmb \theta} \in B$.}
$$

Note that for $t > 0$,
\begin{align} \label{rcd}
\overline{F}_A(t) = \int_{\Delta_d} \overline{F_{\pmb \theta}}(t
  h_{\pmb \theta}) \, \nu(d \pmb \theta).
\end{align}
Therefore,
\begin{align} \label{ltdif}
\frac{\overline{F_A(t)} - \overline{F_A(t + 1)}}{\overline{F_A}(t)} =
  \frac{\int_{\Delta_d} (\overline{F_{\pmb \theta}}(t h_{\pmb
  \theta}) - \overline{F_{\pmb \theta}}((t + 1)  h_{\pmb
  \theta})) \, \nu(d \pmb \theta)}{\int_{\Delta_d} \overline{F_{\pmb
  \theta}}(t h_{\pmb \theta}) \, \nu(d \pmb \theta)}\,,
\end{align}
and we wish to show that this quantity goes to $0$ as $t \to \infty$.

By the assumptions, for any fixed $\pmb \theta$,
 $F_{\pmb \theta} \in
\mathscr{L}$, hence for any fixed $\pmb \theta$ such that $h_{\pmb
\theta}<\infty$,
\begin{align*}
\lim_{t \to \infty} \frac{\overline{F_{\pmb \theta}}(t h_{\pmb \theta}) - \overline{F_{\pmb \theta}}((t + 1)  h_{\pmb \theta})}{\overline{F_{\pmb \theta}}(t  h_{\pmb \theta})} = 0.
\end{align*}
Therefore, for a given $\epsilon > 0$, there exists $t_\epsilon > 0$
such that, for all $t > t_\epsilon$, $\nu(S_{t, \epsilon}) <
\epsilon$, where
\begin{align*}
S_{t, \epsilon} = \left\{ \pmb \theta \in \Delta_d:\, h_{\pmb
\theta}<\infty \ \text{and} \
  \frac{\overline{F_{\pmb \theta}}(t  h_{\pmb \theta}) -
  \overline{F_{\pmb \theta}}((t + 1)  h_{\pmb
  \theta})}{\overline{F_{\pmb \theta}}(t  h_{\pmb \theta})} >
  \epsilon \right\}.
\end{align*}

Let $\epsilon <(\delta/2)^2$. Then $\nu\bigl( B\cap S_{t,
  \epsilon}^c\bigr)>\bigl(\nu(S_{t, \epsilon})\bigr)^{1/2}$. By the
definition of the set $B$ and by \eqref{e:unif.double},
 for some $C_1, \, \tilde{t}_0 > 0$
\begin{align*}
\frac{\overline{F_{\pmb \theta_1}}(t \cdot h_{\pmb \theta_1})}{\overline{F_{\pmb \theta_2}}(t \cdot h_{\pmb \theta_2})} \leq C_1
\end{align*}
for any $\pmb \theta_1\in S_{t, \epsilon}$ and any $\pmb \theta_2 \in
B\cap S_{t,   \epsilon}^c$, for all $t > \tilde{t}_0$. Therefore, for
$t > t_\epsilon + \tilde{t}_0$,
\begin{align*}
\int_{S_{t, \epsilon}} \overline{F_{\pmb \theta}}(t  h_{\pmb \theta}) - \overline{F_{\pmb \theta}}((t + 1)  h_{\pmb \theta}) \, \nu(d \pmb \theta) & \leq  \int_{S_{t, \epsilon}} \overline{F_{\pmb \theta}}(t  h_{\pmb \theta}) \, \nu(d \pmb \theta) \\
& < \frac{\nu(S_{t, \epsilon})}{\nu(B\cap S_{t, \epsilon})^c} C_1
  \int_{B\cap S_{t, \epsilon}^c} \overline{F_{\pmb \theta}}(t \cdot h_{\pmb \theta}) \, \mu(d \pmb \theta) \\
& <  \epsilon^{1/2} C_1 \int_{\Delta_d} \overline{F_{\pmb
  \theta}}(t \cdot h_{\pmb \theta}) \, \nu(d \pmb \theta).
\end{align*}
Hence, for $t > t_\epsilon + \tilde{t}_0$, the quantity in
(\ref{ltdif}) is bounded above by $\epsilon +\epsilon^{1/2}
C_1$. Letting $\epsilon \searrow 0$ gives us the desired result.
\end{proof}

We are now ready to give an example showing that
$\mathscr{S}_\mathcal{R} \not\subset \mathscr{S}(\nu; \mathbf{b})$.
\begin{example} \label{snotincr}
Let $0<|\gamma|\leq 1/2$. It is shown in in \cite{cline:resnick:1992}
  that a legitimate probability distribution $F$, supported by
  $(0,\infty)^2_+$, satisfies
\begin{align}
P(X > x, Y > y) = \frac{1 + \gamma \sin( \log(1 + x + y)) \cos(
  \frac{1}{2} \pi \frac{x - y}{1 + x + y})}{1 + x + y}, \, x,y\geq 0\,.
\end{align}
Then
$$
P(X>x)=P(Y>x) \sim x^{-1} \ \ \text{as $x\to\infty$}\,,
$$
but $F\notin \mathscr{S}(\nu; \mathbf{b})$; see
\cite{cline:resnick:1992}.
Straightforward differentiation gives us the density $f$ of $F$, and
one can check that it satisfies
 \begin{align*}
\frac{2 - 4 \gamma - 3 \gamma \pi - \pi^2 / 4}{(1 + x + y)^3} \leq f(x,y) \leq \frac{2 + 4 \gamma + 3 \gamma \pi + \pi^2 / 4}{(1 + x + y)^3}\,,
\end{align*}
so by \eqref{rcddens}, we have
\begin{align*}
a \frac{w}{(1 + w)^3} \leq f_{\pmb \theta}(w) \leq b \frac{w}{(1 + w)^3},~w > 0,
\end{align*}
for some $0 < a < b < \infty$, independent of $\pmb \theta$. It is
clear that the conditions of Proposition \ref{sdom} are satisfied and,
hence, $F \in \mathscr{S}_\mathcal{R}$.
\end{example}

Proposition \ref{sdom} gives us a way to check that a multivariate
distribution belongs to the class $\mathscr{S}_\mathcal{R}$, but it
only applies to distributions that have,
marginally, dominated varying tails. In the remainder of this section
we provide sufficient conditions for membership in
$\mathscr{S}_\mathcal{R}$ that do not require marginals with dominated
varying tails. We start with a motivating example.

\begin{example} \label{ex:isotropic} [Rotationally invariant case]
Assume that there is a one-dimensional distribution $G$ such that
$\overline{F_{\pmb \theta}} = \overline{G}$ for all $\pmb \theta \in
\Delta_d$.  Let $A\in\calR$, and notice that, in the rotationally
invariant case, a random variable $Y_A$ with distribution $F_A$ can be
written, in law, as
\begin{align} \label{e:product}
Y_A \eid Z  H^{-1},
\end{align}
with $Z$ and $H$ being independent, $Z$ with the distribution $G$, and
$H=h_\Theta$. Here $h$ is defined by \eqref{height}, and $\Theta$
has the law $\nu$ over the simplex $\Delta_d$. Recall that the
function $h$ is bounded away from zero, so that the random variable
$H^{-1}$ is bounded. If $G \in \mathscr{S}$, then  the product in the
right hand side is subexponential by Corollary 2.5 in
  \cite{cline:samorodnitsky:1994}. Hence $F_A \in \mathscr{S}$ for all
  $A\in\calR$,  and so $F \in \mathscr{S}_\mathcal{R}$.
\end{example}

The rotationally invariant case of Example \ref{ex:isotropic}  can be
slightly extended, without much effort, to the case where there is a bounded
positive function $\bigl( a_{\pmb \theta}, \, {\pmb \theta}\in \Delta_d\bigr)$
such that $F_{\pmb   \theta}(\cdot) = G(\cdot / a_{\pmb \theta})$ for
some $G \in \mathscr{S}$. An argument similar to the one in the
example shows that we can still conclude that $F \in \mathscr{S}_\mathcal{R}$.
In order to achieve more than that, we note that the distribution $F_A$ can
be represented, by \eqref{rcd}, as a mixture of scaled regular
conditional distributions. Note also that the product of independent
random variables in \eqref{e:product} is just a special case of that
mixture, to which we have been able to apply  Corollary 2.5 in
  \cite{cline:samorodnitsky:1994}. It is  likely to be possible to
  extend that result to certain mixtures that are more general
  than products of   independent random variables, and thus to obtain
additional criteria for membership in the class
$\mathscr{S}_\mathcal{R}$. We leave serious extensions of this type to future
work. A small extension that still steps away from exact products is
below, and it takes a result in  \cite{cline:samorodnitsky:1994} as
an ingredient.   We formulate the statement in terms of
  the distribution of a random variable that only in a certain
  asymptotic sense looks like a product of independent random
  variables.

\begin{theorem} \label{mixture}
Let $(\Omega_i,{\mathcal F}_i,P_i)$, $i=1,2$ be probability
spaces. Let $Q$ be a random variable defined on the product
probability space. Assume that there are nonnegative random variables
$X_i$, $i=1,2$, defined on $(\Omega_1,{\mathcal F}_1,P_1)$ and
$(\Omega_2,{\mathcal F}_2,P_2)$ correspondingly, such that
$X_1$ has a subexponential distribution $F$, and for some $t_0>0$ and
$C>0$,
\begin{equation} \label{e:almost.prod}
X_1(\omega_1)X_2(\omega_2) -C X_2(\omega_2) \leq Q(\omega_1,\omega_2)
\leq X_1(\omega_1)X_2(\omega_2) +C X_2(\omega_2)
\end{equation}
a.s. on the set $\{ Q(\omega_1,\omega_2)>t_0\}$. Suppose $P(X_2>0)>0$, and let
$G$  be the distribution of $X_2$. Suppose that there is a
function $a: (0, \infty) \to (0, \infty)$, such that
\begin{enumerate}
    \item $a(t) \nearrow \infty$ as $t \to \infty$;
    \item $\frac{t}{a(t)} \nearrow \infty$ as $t \to \infty$;
    \item $\lim_{t \to \infty} \frac{\overline{F}(t -
        a(t))}{\overline{F}(t)} = 1$;
    \item $\lim_{t \to \infty} \frac{\overline{G}(a(t))}{P(X_1 X_2 > t)} = 0$.
\end{enumerate}
Then $Q$ has a subexponential distribution.
\end{theorem}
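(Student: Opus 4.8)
The plan is to reduce the subexponentiality of $Q$ to that of the exact product $X_1X_2$ by exploiting the sandwiching bound \eqref{e:almost.prod}, and then to invoke the product result of \cite{cline:samorodnitsky:1994} together with the auxiliary function $a(\cdot)$ to handle the product. First I would observe that, since $X_2$ is defined on a separate factor of the product space, the distribution of $X_1X_2$ is precisely the mixture $\int \overline{F}(t/s)\,G(ds)$ of scaled copies of $F$; condition (4) is exactly the kind of domination needed to control the "small-scale" part of that mixture, and conditions (1)--(3) give us a slowly growing truncation level $a(t)$ at which $\overline{F}$ is still long-tailed. The key intermediate claim is that $X_1X_2 \in \mathscr{S}$; I expect this to follow from Corollary 2.5 of \cite{cline:samorodnitsky:1994} (or a minor variant of its proof), since that is precisely a statement about products of independent nonnegative random variables being subexponential under tail-domination hypotheses of the form (3)--(4). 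Note the function $h_{\pmb\theta}$ in Example \ref{ex:isotropic} is bounded away from zero, which corresponds to the special case $X_2$ bounded away from $0$; here we are allowing a genuinely unbounded $X_2$, which is why conditions (2) and (4) are needed.

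Second I would pass from $X_1X_2$ to $Q$. On the event $\{Q>t_0\}$ we have $|Q - X_1X_2|\le C X_2$, so for $t>t_0$,
\begin{align*}
P(X_1X_2 - CX_2 > t) \le P(Q>t) \le P(X_1X_2 + CX_2 > t)\,.
\end{align*}
It therefore suffices to show that the two distributions $X_1X_2 \pm CX_2$ are each subexponential and tail-equivalent to $X_1X_2$. Writing $X_1X_2 \pm CX_2 = (X_1\pm C)X_2$ (on $\{X_2>0\}$), this is again a product of independent random variables, with the first factor $X_1\pm C$ differing from $X_1$ by a bounded shift; since $F$ is long-tailed, $X_1\pm C$ has a distribution tail-equivalent to $F$, and one checks that conditions (1)--(4) are stable under replacing $F$ by the law of $X_1\pm C$ (using (3) together with monotonicity of $a(t)/t$ for the relevant shift estimates). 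Hence by the same product result $(X_1\pm C)X_2 \in \mathscr{S}$ and, by comparing $P((X_1\pm C)X_2>t)$ with $P(X_1X_2>t)$ via the long-tail property integrated against $G$, these tails are asymptotically equal. Then $P(Q>t)\sim P(X_1X_2>t)$, and since subexponentiality is preserved under tail-equivalence (Proposition \ref{properties}(a) in the one-dimensional incarnation, i.e. Lemma 4 of \cite{embrechts:goldie:veraverbeke:1979}), $Q\in\mathscr{S}$.

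The main obstacle is the product step itself: showing $X_1X_2\in\mathscr{S}$ when $X_2$ is unbounded. The difficulty is that a product of a subexponential random variable with an unbounded multiplier need not be subexponential without a growth/domination condition tying the tail of $X_2$ to the tail of $X_1$, and the precise role of conditions (1)--(4) is to supply exactly such a condition: one splits $\{X_1X_2>t\}$ according to whether $X_2 \le a(t)$ or $X_2 > a(t)$; on the first part one uses long-tailedness of $F$ at scale $t/X_2 \ge t/a(t)\to\infty$ together with (2)--(3), and on the second part one bounds the contribution by $\overline{G}(a(t))$, which is $o(P(X_1X_2>t))$ by (4). Making the first part rigorous requires a uniform (in the scale) long-tail estimate, and this is the technical heart where one must either cite the corresponding lemma in \cite{cline:samorodnitsky:1994} or reproduce a short truncation argument; everything else is bookkeeping with tail-equivalence.
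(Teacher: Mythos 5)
Your proposal is correct and follows essentially the same route as the paper: sandwich $Q$ between $X_1X_2\pm CX_2$, establish subexponentiality of the product $X_1X_2$ via the Cline--Samorodnitsky product result (it is their Theorem 2.1, whose hypotheses are exactly conditions (1)--(4), rather than Corollary 2.5, which is the bounded-multiplier case), and absorb the perturbation by splitting on $\{X_2>a(t)\}$, using (4) there and (3) with the monotonicity of $t/a(t)$ on the complement. The only cosmetic difference is that the paper handles the perturbation by proving $\overline{H}(t\pm a(t))\sim\overline{H}(t)$ for $H$ the law of $X_1X_2$ and bounding $P(X_1X_2+CX_2>t)\leq \overline{G}(a(t))+\overline{H}(t-Ca(t))$, whereas you refactor as $(X_1\pm C)X_2$ and reapply the product theorem; both work.
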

\begin{proof}
Let $H$ denote the distribution of $X_1X_2$.  It follows by Theorem 2.1 in
\cite{cline:samorodnitsky:1994}  that $H$ is subexponential.  We show that
$P(Q > t) \sim \overline{H}(t)$ as $t\to\infty$. This will imply that
$Q$ has a subexponential distribution.

We start by checking that
\begin{align} \label{e:H.same}
\lim_{t \to \infty} \frac{\overline{H}(t - a(t))}{\overline{H}(t)} = 1,
\ \ \text{implying that} \ \lim_{t \to \infty} \frac{\overline{H}(t +
  a(t))}{\overline{H}(t)} = 1\,,
\end{align}
since $a(t+a(t))\geq a(t)$. To verify the limit, suppose first that
$X_2\geq 1$ a.s., and write
\begin{align*}
& P\bigl( t - a(t)<X_1X_2\leq t\bigr) \leq P_2(X_2 > a(t)) \\
&  \leq \int_{\Omega_2} P_1( t/X_2(\omega_2) -
 a(t)/X_2(\omega_2)<X_1\leq t/X_2(\omega_2) \bigr) \one\bigl(
 X_2(\omega_2) \leq a(t)\bigr)\, P_2(d\omega_2)
\end{align*}
The first term in the right hand side is $o(\overline{H}(t))$ by the
assumption (4), while the same is true for the second term by the
assumption (3), since by the assumption (2), $a(t)/y\leq a(t/y)$ if
$y\geq 1$. This proves \eqref{e:H.same} if $X_2\geq 1$ a.s. and
hence, by scaling, if $X_2\geq \epsilon$ a.s. for some
$\epsilon>0$. An elementary truncation argument then shows that
\eqref{e:H.same}  holds if $P(X_2>0)>0$.

Note that for $t>t_0$,
\begin{align*}
P(Q > t) &\leq P( X_1X_2+CX_2>t) \\
&\leq \overline{G}(a(t)) +  \overline{H}(t-Ca(t))\,.
\end{align*}
This implies that  $\limsup_{t \to \infty} P(Q >
  t)/\overline{H}(t)\leq 1$. The statement $\liminf_{t
  \to \infty} P(Q > t)/\overline{H}(t)\geq 1$ can be shown in a
similar way.
\end{proof}

Despite a limited scope of the extension given in Theorem
\ref{mixture}, it allows one to construct a number of examples of
multivariate distributions in $\mathscr{S}_\mathcal{R}$ by choosing,
for example,
$\Omega_2=\Delta_d$ and $X_2({\pmb \theta})=1/h({\pmb \theta})$, $
{\pmb \theta}\in \Delta_d$, and selecting a function $Q$ to model
additional randomness in the radial direction.

\bigskip

\section{Ruin Probabilities}
\label{sec:ruin}

As mentioned in the introduction, the notion of subexponentiality we
introduced in Section \ref{sec:our.def} was designed with insurance
applications in mind. In this section we describe such an application
more explicitly.

Consider a renewal model for the reserves of an insurance company with
$d$ lines of business. Suppose that claims arrive according to a renewal
process $(N_t)_{t \geq 0}$ given by $N_t = \sup\{n \geq 1: T_n \leq
  t \} $. The arrival times $(T_n)$ form a renewal sequence
\begin{align}
T_0 = 0,~~~T_n = Y_1 + \dots + Y_n~\text{for}~n \geq 1,
\end{align}
where the interarrival times $(Y_i)_{i \geq 1}$ form a sequence of
independent and identically distributed positive random variables. We
will call a generic interarrival time $Y$.
At the arrival time $T_i$ a random vector-valued claim size
$\BX^{(i)}=\bigl(X^{(i)}_1,\ldots, X^{(i)}_d\bigr)$ is incurred, so that the
part of the claim going to the $j$th line of business is $X^{(i)}_j$.
We assume that the  claim sizes $(\BX^{(i)})$ are i.i.d. random
vectors with a finite mean, and we denote their common law by $F$. We
assume further that
the claim size process is independent of the renewal
process of the claim arrivals. The $j$th line of business collects
premium at the rate of  $p^j$ per unit of time.
Let $\mathbf{p}$ be the vector of the premium rates, and
 $\mathbf{X}$  a generic random vector of  claim sizes.

Suppose that the
company has an initial buffer capital of $u$, out of which the amount
of $u b_j$ is allocated to the $j$th line of business, $j=1,2,\ldots,
d$. Here $b_1,\ldots, b_d$ are positive numbers,  $b_1 +
\dots + b_d = 1$. Then $u \mathbf{b}$ denotes the vector for the
initial capital buffer allocation. With the above notation, the claim
surplus process $(\mathbf{S}_t)_{t \geq 0}$ and the risk reserve
process $(\mathbf{R}_t)_{t \geq 0}$ are given by
\begin{align*}
\mathbf{S}_t = \sum_{i = 1}^{N_t} \mathbf{X}^{(i)} - t
\mathbf{p},~~~\mathbf{R}_t = u \mathbf{b} - \mathbf{S}_t = u
\mathbf{b} + t \mathbf{p} - \sum_{i = 1}^{N_t} \mathbf{X}^{(i)}, \ t\geq 0\,.
\end{align*}

The company becomes insolvent (ruined) when the risk reserve process
hits a certain ruin set $L \subset \mathbb{R}^d$.  Equivalently, ruin
occurs when the claim surplus process enters the set $u\bb-L$. We will
assume that the ruin set satisfies the following condition.
\begin{assumption} \label{ass:ruin.set}
The ruin set is an open decreasing set such that $\mathbf{0}
\in \partial L$, satisfying  $L=uL$ for $u>0$, and such that $L^c$ is
convex.
\end{assumption}

Note that this assumption means that the ruin occurs when the claim
surplus process enters the set $uA$, with $A=\bb-L\in \calR$, as
defined in Section \ref{sec:our.def}. In fact, the ruin set $L$ can be
viewed as being of the form $-G$, as defined in Remark \ref{trans}.
Examples of such ruin sets are, of course, the sets
$$
L = \bigl\{ \bx:\, x_j<0 \ \text{for some} \ j=1,\ldots, d\bigr\}
\ \ \text{and} \ \
L = \bigl\{ \bx:\, x_1+\ldots +x_d<0\bigr\}\,,
$$
discussed in Section \ref{sec:our.def}. A general framework was
proposed in \cite{hult:lindskog:2006a}.  In this framework capital can
be transferred between different business lines, but the transfers
incur costs, and the solvency set has the form
\begin{align} \label{solvency}
L^c = \left\{ \mathbf{x}:\, \mathbf{x} = \sum_{i \neq j}
  v_{ij}(\pi_{ij} \mathbf{e}^i - \mathbf{e}^j) + \sum_{i = 1}^d w_i
  \mathbf{e}^i, \, v_{ij}\geq 0,\,  w_i \geq 0 \right\},
\end{align}
where $\mathbf{e}^1, \dots, \mathbf{e}^d$ are the standard basis
vectors, and $\Pi = ( \pi_{ij} )^d_{i,j = 1}$ is a matrix satisfying
\begin{enumerate}
    \item [(i)] $\pi_{ij} \geq 1$ for $i,j \in \left\{1, \dots, d \right\}$,
    \item [(ii)] $\pi_{ii} = 1$ for $i \in \left\{1, \dots, d \right\}$,
    \item [(iii)] $\pi_{ij} \leq \pi_{ik} \pi_{kj}$ for $i, j, k \in \left\{ 1, \dots, d \right\}$.
\end{enumerate}
In the financial literature, a matrix satisfying the above constraints
is called a bid-ask matrix. In our context, the entry $\pi_{ij}$ can
be interpreted as the amount of capital that needs to be taken from
business line $i$ in order to transfer $1$ unit of capital to
business line $j$.

We note that each of the above ruin sets is a cone, i.e. it satisfies
$L=uL$ for $u>0$, as assumed in Assumption \ref{ass:ruin.set}.

We maintain the notation $A=\bb-L\in \calR$. Note that we can write
the ruin probability  as
\begin{align} \label{ruinprob}
\psi_{\mathbf{b}, L}(u) & = P(\mathbf{R}_t \in L~\text{for some}~ t
                          \geq 0) \\
& = P\left(\sum_{i = 1}^{n} \mathbf{X}^{(i)} - Y_i \mathbf{p} \in uA~\text{for
  some}~ n \geq 1\right)  \notag \\
& = P\left(\sum_{i = 1}^{n} \mathbf{Z}^{(i)}  \in uA~\text{for
  some}~ n \geq 1\right)\,, \notag
\end{align}
where $\mathbf{Z}^{(i)} = \mathbf{X}^{(i)} - Y_i \mathbf{p}$,
$i=1,2,\ldots$. We let $\mathbf{Z}$ denote a generic element of the
sequence $(\mathbf{Z}^{(i)})_{i \geq 1}$. We will assume a
positive safety loading, an assumption that takes now the form
$$
\mathbf{c} = -\mathbb{E}[\mathbf{Z}] > \mathbf{0}\,,
$$
see e.g. \cite{asmussen:2000}. The assumption of the finite mean for
the claim sizes implies that
$$
\theta:=\int_0^\infty F\Bigl(  [0,\infty)^d
+ v \mathbf{c}\Bigr)   \, dv <\infty\,,
$$
and we can defined a probability measure on $\bbr^d$, supported by
$[0,\infty)^d$, by
\begin{equation} \label{e:F.I}
F^I(\cdot) =\frac{1}{\theta} \int_0^\infty F(  \cdot
+ v \mathbf{c})   \, dv\,.
\end{equation}
Denote
\begin{align} \label{inttail}
H(u) =   \int_0^\infty F(uA + v \mathbf{c}), \ u>0\,.
\end{align}

The following is the main result of this section.
\begin{theorem} \label{ruintail}
Suppose that the law $F^I$  is in $\mathscr{S}_A$. Then the ruin
probability $\psi_{\mathbf{b}, L}$ satisfies
\begin{align} \label{e:ruin.asymp}
\lim_{u \to \infty} \frac{\psi_{\mathbf{b}, L}(u)}{H(u)} = 1.
\end{align}
\end{theorem}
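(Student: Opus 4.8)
The plan is to reduce the multivariate ruin asymptotics to the classical one-dimensional subexponential ruin result applied to the distribution $F_A^I$. The key observation is that, by \eqref{ruinprob}, ruin by time corresponds to the event $\{\sum_{i=1}^n \mathbf{Z}^{(i)} \in uA \text{ for some } n \geq 1\}$ where $\mathbf{Z}^{(i)} = \mathbf{X}^{(i)} - Y_i\mathbf{p}$, and I want to express this in terms of the scalar functional $\mathbf{x} \mapsto \sup_{\mathbf{p} \in I_A}\mathbf{p}^T\mathbf{x}$ provided by part (c) of Lemma \ref{basic}. First I would set $\tilde S_n = \sup_{\mathbf{p}\in I_A}\mathbf{p}^T(\mathbf{Z}^{(1)}+\dots+\mathbf{Z}^{(n)})$, so that $\{\mathbf{R}_t \in L \text{ for some }t\} = \{\sup_{n\geq 1}\tilde S_n > u\}$. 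This is now a genuinely one-dimensional all-time-maximum problem for a random walk; the difficulty is that the summands $\mathbf{p}^T\mathbf{Z}^{(i)}$ for different $\mathbf{p}$ are coupled through the supremum, so $\tilde S_n$ is \emph{not} itself a random walk. I would therefore pass instead through the increments of $\sup_{\mathbf{p}}\mathbf{p}^T(\cdot)$ evaluated at the walk, and relate the all-time maximum of this subadditive functional to that of the associated genuine random walk, exactly as in the standard Cram\'er--Lundberg / renewal-type analysis.

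The main technical input is the following. By the positive safety loading $\mathbf{c} = -\mathbb{E}[\mathbf{Z}] > \mathbf{0}$, for every $\mathbf{p}\in I_A$ the drift $\mathbf{p}^T\mathbb{E}[\mathbf{Z}] < 0$; moreover, because $I_A$ can be taken to consist of vectors with $\mathbf{p}^T\mathbf{x}_0 = 1$ on $\partial A$ and $A^c$ is convex, one can show $\inf_{\mathbf{p}\in I_A}\mathbf{p}^T\mathbf{c} > 0$, giving a \emph{uniform} negative drift. Hence $\tilde S_n \to -\infty$ a.s. and the all-time supremum $M := \sup_{n\geq 0}\tilde S_n$ is a.s. finite. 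The integrated-tail distribution governing the tail of $M$ is precisely $F_A^I$: indeed, from \eqref{e:F.I} and \eqref{inttail},
\begin{align*}
\overline{F^I_A}(u) = 1 - F^I(uA) = \frac{1}{\theta}\int_0^\infty F(uA + v\mathbf{c})\,dv = \frac{1}{\theta}H(u),
\end{align*}
and one checks that $F^I$ restricted to the functional $\mathbf{x}\mapsto\sup_{\mathbf{p}}\mathbf{p}^T\mathbf{x}$ is the one-dimensional integrated tail of the ``ladder-height-like'' distribution built from $F$ and $\mathbf{c}$. Since $F^I \in \mathscr{S}_A$ by hypothesis, $F^I_A \in \mathscr{S}$ by Definition \ref{multsubdef}.

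With these identifications the proof proceeds as in the one-dimensional subexponential case: I would invoke the random-geometric-sum representation of the all-time maximum of a random walk with negative drift (Pollaczek--Khinchine-type identity in the renewal setting, e.g. via Theorem 1.3.6 in \cite{embrechts:kluppelberg:mikosch:1997} or Asmussen \cite{asmussen:2000}), write $P(M > u)$ as the tail of a compound sum of i.i.d. terms each distributed according to (a scalar functional of) $F^I$, and apply \eqref{e:rand.terms} together with the fact that $F^I_A \in \mathscr{S}$ and the geometric number of terms satisfies the exponential moment condition \eqref{e:exp.mom}. The upper bound $\limsup \psi_{\mathbf{b},L}(u)/H(u) \leq \rho^{-1}$-type estimate follows from Proposition \ref{properties}(c) (the uniform $K(1+\epsilon)^n$ bound), which is exactly what is needed to justify dominated convergence when summing the geometric series; the matching lower bound uses Corollary \ref{convtail} term by term. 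The constant works out to $1$ rather than $\rho^{-1}$ here because $H(u) = \theta\,\overline{F^I_A}(u)$ already absorbs the normalization $\theta$, which plays the role of $\rho^{-1}$ in \eqref{e:subexp.ruin}.

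The step I expect to be the main obstacle is the passage from the coupled functional $\tilde S_n = \sup_{\mathbf{p}\in I_A}\mathbf{p}^T(\mathbf{Z}^{(1)}+\dots+\mathbf{Z}^{(n)})$ to a genuine one-dimensional random walk whose integrated ladder height is $F^I_A$: one must argue that the ``first passage into $uA$'' of the vector walk has, asymptotically, the same single-big-jump structure as in one dimension, i.e. that ruin is caused by one large claim $\mathbf{X}^{(i)}$ whose contribution $\sup_{\mathbf{p}}\mathbf{p}^T\mathbf{X}^{(i)}$ exceeds the (slowly varying) level, while all other increments behave like their means. The uniform drift bound and the increasing/convex geometry of $A$ are what make this work — they guarantee that between big jumps the walk drifts away from $uA$ at a uniform rate and that once a big jump occurs the walk lands deep inside $uA$ — but making this rigorous while only assuming $F^I \in \mathscr{S}_A$ (a scalar condition) rather than a full multivariate regular variation hypothesis is the delicate part, and is precisely where Lemma \ref{upbound}, Corollary \ref{convtail}, and Proposition \ref{properties} are designed to be used.
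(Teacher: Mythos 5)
There is a genuine gap, and it sits exactly where you yourself flag the ``main obstacle'': the reduction to a one-dimensional random walk is never carried out, and in the form you describe it cannot be. The functional $\tilde S_n=\sup_{\mathbf{p}\in I_A}\mathbf{p}^T\mathbf{S}_n$ is not a random walk, so there is no Pollaczek--Khinchine / ladder-height identity for it; and the natural dominating walk with increments $W_i=\sup_{\mathbf{p}\in I_A}\mathbf{p}^T\mathbf{Z}^{(i)}$ does not have $F^I_A$ as its integrated tail. The reason is geometric: the events that matter are $\{\mathbf{S}_n\in uA+v\mathbf{c}\}$, and since $\mathbf{p}^T\mathbf{c}$ is not constant over $I_A$, the set $uA+v\mathbf{c}$ is not of the form $u'A$ for any $u'$, i.e.\ it is not a level-crossing event for the scalar functional. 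Concretely, for $A=\{\mathbf{x}: x_1>1 \text{ or } x_2>1\}$ and $c_1\neq c_2$, the 1-d integrated tail $\int_0^\infty P(W>u+\lambda v)\,dv$ (with $\lambda=|EW|$) is not asymptotic to $H(u)=\int_0^\infty F(uA+v\mathbf{c})\,dv$, so even the upper bound obtained from the classical theorem applied to the dominating walk would carry the wrong constant, and no lower bound at all comes from that route. Your auxiliary claim that $\inf_{\mathbf{p}\in I_A}\mathbf{p}^T\mathbf{c}>0$ is also false in general: the boundary of $A$ may flatten out so that vectors in $I_A$ have arbitrarily small norm (e.g.\ $A^c$ the hypograph of a concave decreasing function), so even the ``uniform negative drift'' premise needs care. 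Saying that Lemma \ref{upbound}, Corollary \ref{convtail} and Proposition \ref{properties} are ``designed to be used'' at this point does not supply the missing argument; that argument is the theorem. Your correct observations are the identification $\overline{F^I_A}(u)=H(u)/\theta$ and the fact that $F^I\in\mathscr{S}_A$ means $F^I_A\in\mathscr{S}$, but these are the easy bookkeeping steps.

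For comparison, the paper never reduces to one dimension. It first shows $\int_0^\infty P(\mathbf{Z}\in uA+v\mathbf{c})\,dv\sim H(u)$. The lower bound is a direct multivariate one-big-jump estimate: on the event that $\mathbf{S}_n>-(K+n(1+\epsilon))\mathbf{c}$ (probability at least $1-\delta$ uniformly in $n$), a single increment $\mathbf{Z}^{(n+1)}\in uA+(K+n(1+\epsilon))\mathbf{c}$ forces ruin, and summing over $n$ and comparing with the integral gives $\liminf\psi_{\mathbf{b},L}(u)/H(u)\geq 1$. The upper bound is obtained through stopping times $\tau_m$ at which the drift-corrected walk re-enters $rA$: the cycle probability $\gamma=P(\tau_1<\infty)$ tends to $0$ as $r\to\infty$, the overshoot distribution at $\tau_1$ is shown (using Lemma \ref{basic}(c) and the convexity/monotonicity of $A$) to have tail asymptotically at most $H(u)$, and then Lemma \ref{upbound}, Kesten's bound (Proposition \ref{properties}(c) in disguise) and dominated convergence control the compound-geometric sum, giving a bound of order $(1-\gamma)^{-2}$ which collapses to $1$ as $r\to\infty$. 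So the compound-geometric structure appears only as an upper bound with a vanishing correction, never as an exact identity; reproducing something like this two-sided argument is what your plan still needs.
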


\begin{remark} \label{uniruin}
Notice, for comparison, that in the univariate case, with the ruin set
 $L = ( -\infty, 0)$ (and $b = 1$) we have $A = (1,
\infty)$, and
\begin{align*}
H(u) = \int_0^\infty \overline{F}(u + vc) \, dv =
  \frac{1}{c} \int_u^\infty \overline{F}(v) \, dv\,.
\end{align*}
In this case the statement \eqref{e:ruin.asymp}
agrees with the standard univariate result on subexponential claims;
see e.g. Theorem 1.3.8 in
\cite{embrechts:kluppelberg:mikosch:1997}. If the claim arrival
process is Poisson, then this is \eqref{e:subexp.ruin} of Section \ref{sec:1dim}.
\end{remark}

\begin{proof} [Proof of Theorem \ref{ruintail}]
We start by observing that the function $H$ is proportional to the
tail of a subexponential distribution $F^I_A$ and, hence, can itself
be viewed as the tail of a subexponential distribution.
We can and will, for example,  simply refer to the ``long tail
property'' of $H$.

We use the ``one big jump" approach to heavy tailed large deviations;
see e.g. \cite{zachary:2004}, and the first step is to show that
\begin{equation} \label{e:Z.H}
\lim_{u\to\infty} \frac{\int_0^\infty P(\mathbf{Z} \in u A + v
  \mathbf{c}) \, dv}{H(u)} =1\,.
\end{equation}
Indeed, the upper bound in \eqref{e:Z.H} follows from the fact that
$A$ is increasing. For the lower bound, notice that, by Fatou's lemma,
it is enough to prove that that for each fixed $\by$,
$$
\lim_{u \to \infty}  \frac{\int_0^\infty  F(u A + v \mathbf{c} + y
  \mathbf{p}) \, dv}{H(u)} =1\,.
$$
This, however, follows from the fact
for sufficiently large $u > 0$, there exists some $u_1>0$ such that $(u
+ u_1)A + v \mathbf{c} \subset uA + v \mathbf{c} + y \mathbf{p}$, and
the long tail property of $H$.

We proceed to prove the lower bound in \eqref{e:ruin.asymp}.
Let $\mathbf{S}_n := \sum_{i = 1}^n \mathbf{Z}^{(i)}$,
$n=1,2,\ldots$. Let $\epsilon, \delta$ be small positive numbers, and
choose $K$ so large that
\begin{align*}
P\bigl(\mathbf{S}_n > -(K + n(1 + \epsilon)) \mathbf{c}\bigr) > 1 - \delta,
  \ n=1,2,\ldots\,.
\end{align*}
Define $M_n = \sup \{ u>0:\,  \mathbf{S}_i \in u A ~\text{for some
} 1 \leq i \leq n \}$ and  $M = \sup \{ u>0:\, \mathbf{S}_n
\in u A ~\text{for some }  n \}$. For $u > 0$,
\begin{align*}
\psi_{\mathbf{b}, L}(u) & = P(M > u) = \sum_{n \geq 0} P( M_n \leq
                          u,\, \mathbf{S}_{n+1} \in u A) \\
& \geq \sum_{n \geq 0} P\bigl( M_n \leq u,\, \mathbf{S}_n > -(K + n(1 +
  \epsilon)) \mathbf{c}, \, \mathbf{Z}^{(n+1)} \in u A + (K + n(1 +
  \epsilon)) \mathbf{c}\bigr) \\
 & \geq \sum_{n \geq 0} (1 - \delta - P(M_n > u)) P\bigl(
   \mathbf{Z}^{(n+1)} \in u A + K \mathbf{c} + n(1 + \epsilon)
   \mathbf{c}\bigr) \\
& \geq (1 - \delta - P(M > u)) \sum_{n \geq 0} P\bigl( \mathbf{Z} \in
  u A + K \mathbf{c} + n(1 + \epsilon) \mathbf{c}\bigr)\,.
\end{align*}
Rearranging, and using the monotonicity of $A$ and the long tail
property of $F_A^I$, we see that
\begin{align*}
\psi_{\mathbf{b}, L}(u) & \geq \frac{(1 - \delta) \sum_{n \geq 0} P(
                          \mathbf{Z} \in u A + K \mathbf{c} + n(1 +
                          \epsilon) \mathbf{c})}{1   + \sum_{n \geq 0}
                          P( \mathbf{Z} \in u A + K \mathbf{c} + n(1 +
                          \epsilon) \mathbf{c})} \\
& \sim  \frac{1 - \delta}{1+\epsilon}\int_0^\infty P\bigl( \mathbf{Z} \in
  u A + K \mathbf{c} + v \mathbf{c}\bigr) \, dv \\
& \sim  \frac{1 - \delta}{1+\epsilon}\int_0^\infty P\bigl( \mathbf{Z} \in
  u A +  v \mathbf{c}\bigr) \, dv,  \ u\to\infty\,.
\end{align*}
Letting  $\delta, \epsilon$ to $0$,  we have, thus, obtained the lower
bound in \eqref{e:ruin.asymp}. We proceed to prove a matching upper
bound.

Fix $0<\epsilon<1$. For $r > 0$, we define a sequence $(\tau_n)$ as
follows: we set $\tau_0 = 0$, and
\begin{align*}
\tau_1 = \inf \bigl\{ n \geq 1 :\,  \mathbf{S}_n \in r A - n (1 - \epsilon)
  \mathbf{c}\bigr\} \,.
\end{align*}
For $m \geq 2$, we set $\tau_m = \infty$
if $\tau_{m - 1} = \infty$. Otherwise, let
\begin{align*}
\tau_m = \tau_{m - 1} + \inf \bigl\{ n \geq 1 :\,  \mathbf{S}_{n +
  \tau_{m - 1}} - \mathbf{S}_{\tau_{m - 1}} \in r A - n (1 -
  \epsilon) \mathbf{c} \bigr\}.
\end{align*}
If we let $\gamma = P(\tau_1 < \infty)$, then for any $m \geq 1$,
$P(\tau_m < \infty) = \gamma^m$. By the positive safety loading
assumption, $\gamma \to 0$ as $r \to \infty$. Note that for  $u > 0$,
\begin{align*}
P\bigl(\tau_1<\infty, \ \mathbf{S}_{\tau_1} \in u A\bigr)
& = \sum_{n \geq 1} P(\tau_1 = n, \, \mathbf{S}_n  \in u A)
\leq  \sum_{n \geq 1}
P\bigl(\mathbf{S}_{n - 1} \in r A^c  - (n - 1) (1 -\epsilon) \mathbf{c},\,
\mathbf{S}_n \in u A\bigr)\,.
\end{align*}
By part (c) of Lemma \ref{basic}, $\mathbf{S}_n \in u A$ if and
only if
$\sup_{\mathbf{p} \in I_A} \mathbf{p}^T \mathbf{S}_n > u$. Further,
\begin{align*}
\sup_{\mathbf{p} \in I_A} \mathbf{p}^T \mathbf{S}_n
& \leq \sup_{\mathbf{p} \in I_A} \mathbf{p}^T \bigl(\mathbf{S}_{n - 1}
+ (n - 1) (1 - \epsilon) \mathbf{c}\bigr)
+ \sup_{\mathbf{p} \in I_A} \mathbf{p}^T \bigl(\mathbf{Z}^{(n)}
- (n - 1) (1 -  \epsilon) \mathbf{c}\bigr).
\end{align*}
Let $u>r$. If $\mathbf{S}_{n - 1} \in r A^c - (n - 1) (1 - \epsilon)
\mathbf{c}$, then $\sup_{\mathbf{p} \in I_A}
\mathbf{p}^T \bigl(\mathbf{\Gamma}_{n - 1} + (n - 1) (1 - \epsilon)
\mathbf{c}\bigr) \leq r$, so for $\sup_{\mathbf{p} \in I_A} \mathbf{p}^T
\mathbf{S}_n > u$ to hold, it must be the case that $\sup_{\mathbf{p}
  \in   I_A} \mathbf{p}^T \bigl(\mathbf{Z}^{(n)} - (n - 1) (1 -
\epsilon) \mathbf{c}\bigr) > u - r$, implying that $\mathbf{Z}^{(n)} \in
(u - r)   A + (n - 1) (1 - \epsilon) \mathbf{c}$.

Summing up, we see that, as $u\to\infty$,
\begin{align*}
P\bigl(\tau_1<\infty, \, \mathbf{S}_{\tau_1} \in u A\bigr) & \leq \sum_{n \geq 1}
  P\bigl(\mathbf{Z}^{(n)}   \in (u - r) A   + (n - 1) (1 -  \epsilon)  \mathbf{c}\bigr) \\
& \sim \int_0^\infty P(\mathbf{Z} \in (u - r) A + v (1 - \epsilon)
  \mathbf{c}) \, dv \\
& \sim\frac{1}{1-\epsilon}H(u - r)\,.
\end{align*}
Letting   $\epsilon \to 0$ and using the long tail property of $H$,
we obtain
\begin{equation} \label{e:ub.1}
\limsup_{u\to\infty} \frac{P\bigl(\tau_1<\infty, \,
  \mathbf{S}_{\tau_1} \in u A\bigr)}{H(u) }\leq 1\,.
\end{equation}

Let $(\BV^{(i)})$ be a sequence of independent identically distributed
random vectors whose law is the conditional law of $
\mathbf{S}_{\tau_1}$ given that $\tau_1 < \infty$. By \eqref{e:ub.1},
there is a distribution $B$ on $[0,\infty)$ such that $\overline{B}(u)
\sim  \gamma^{-1} H(u)$ as $u\to\infty$ and
$$
P\bigl(\BV^{(1)} \in uA\bigr) \leq \overline{B}(u) \ \ \text{for all $u\geq 0$.}
$$
Note, further, that by the definition of the sequence $(\tau_m)$, for
every $m\geq 0$, on the event $\{ \tau_m<\infty\}$, we have,
for $1 \leq i < \tau_{m+1}$, $\mathbf{S}_{\tau_m + i} -
\mathbf{S}_{\tau_m} \in r A^c - i (1 - \epsilon) \mathbf{c} \subset r
A^c - (1 - \epsilon) \mathbf{c}$. If $\mathbf{S}_{\tau_m} \in
(u - r) A^c + (1 - \epsilon) \mathbf{c}$, then  we have
$\mathbf{S}_{\tau_m + i} \in u A^c$. Hence, for the event
$\{\mathbf{S}_n \in u A$ for some $n\}$ to occur, we  must be have
$$
\mathbf{S}_{\tau_m} \in \bigl( (u - r) A + (1 - \epsilon)
\mathbf{c}\bigr) \cup uA \ \ \text{for some $m$.}
$$
Therefore, we can use Lemma \ref{upbound} to obtain
\begin{align*}
\psi_{\mathbf{b}, L}(u) & = P(M > u) \leq \sum_{m \geq 1} P\bigl(
\mathbf{S}_{\tau_m} \in\bigl(  (u - r) A +   (1 - \epsilon)
                          \mathbf{c}\bigr) \cup uA\bigr) \\
& \leq \sum_{m \geq 1} \gamma^m P\big(\BV^{(1)} + \dots + \BV^{(m)}
  \in (u - r) A\bigr) \\
& \leq \sum_{m \geq 1} \gamma^m \overline{B^{(m)}} (u - r).
\end{align*}
By the assumption, the $H$ is the tail of a subexponential
distribution, and, hence, $B$ is subexponential as well.
This implies that
\begin{align*}
\lim_{u \to \infty} \frac{\overline{B^{(m)}}(u )}{\overline{B}(u)} =   m\,,
\end{align*}
and that for any $\epsilon > 0$, there exists $K > 0$ such that for
all $u > 0$ and $m \geq 1$,
\begin{align*}
\frac{\overline{B^{(m)}}(u)}{\overline{B}(u)} \leq K (1 + \epsilon)^m.
\end{align*}
Since we can make $\gamma>0$ as small as we wish by choosing $r$
large, we can use the dominated convergence theorem to obtain
\begin{align*}
\limsup_{u \to \infty} \frac{\psi_{\mathbf{b}, L}(u)}{\gamma
  \overline{B} (u-r)} &  = \sum_{m \geq 1}
                        \gamma^{m - 1} m = \frac{1}{(1 - \gamma)^2}.
\end{align*}
Letting $r \to \infty$, which makes $\gamma \to 0$, we have that
\begin{align*}
\limsup_{u \to \infty} \frac{\psi_{\mathbf{b}, L}(u)}{ H (u)}  & \leq 1\,,
\end{align*}
which is the required upper bound in \eqref{e:ruin.asymp}.
\end{proof}

We finish this section by returning to the special case of
multivariate regularly varying claims. Recall that, by Proposition
\ref{mrvprop}, the distributions in $MRV(\alpha,\mu)$ are in
$\mathscr{S}_{\mathcal{R}}$. The asymptotic behaviour of the ruin
probability with the solvency set $L^c$ given by
(\ref{solvency}),  and multivariate regularly varying claims with
$\alpha>1$, was determined by \cite{hult:lindskog:2006a}. To state
their result, notice that the tail measure of a random vector $\BX$
(recall \eqref{e:regvar.d.st}) is determined up to a scaling by a
positive constant, and a different scaling in the tail measure can be
achieved by scaling appropriately the function $b$ in
\eqref{e:regvar.d.st}. Let us scale the tail measure $\mu$ in such a
way that it assigns unit mass to the complement of the unit ball in
$\bbr^d$. The norm we choose is unimportant, but for consistency with
the notation used elsewhere in the paper, let us use the $L_1$
norm. With this convention, we can restate \eqref{e:regvar.d.st} as
\begin{equation} \label{e:regvar.d.st0}
\frac{P(\BX\in u\cdot)}{P(\|\BX\|>u)} \vconv \mu
\end{equation}
vaguely  on $[-\infty,\infty]^d \setminus \{{\bf 0}\}$. It was shown by
\cite{hult:lindskog:2006a} that under the assumption
\eqref{e:regvar.d.st0} (and with the solvency set $L^c$ given by
(\ref{solvency})), the ruin probability satisfies
\begin{align} \label{mrvruin1}
\lim_{u \to \infty} \frac{\psi_{\mathbf{b}, L}(u)}{u P(\|\mathbf{X}\| >
  u)} = \int_0^\infty \mu(\mathbf{b} - L + v \mathbf{c}) \, dv.
\end{align}
We extend the above result to all ruin sets satisfying Assumption
\ref{ass:ruin.set}. To avoid a degenerate situation (and the resulting
complications in the notation) we will assume that $\mu\{ \bx:\,
x_i>0\}>0$ for each $i=1,\ldots, d$.

\begin{proposition} \label{mrvruin2}
Assume  that the ruin set $L$ satisfies Assumption
\ref{ass:ruin.set}. If the claim sizes satisfy \eqref{e:regvar.d.st0}
with $\alpha > 1$, then \eqref{mrvruin1} holds.
\end{proposition}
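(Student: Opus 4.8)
The plan is to apply Theorem~\ref{ruintail} together with a direct asymptotic evaluation of the function $H$ of \eqref{inttail}. Write $A=\mathbf b-L$; by Assumption~\ref{ass:ruin.set} we have $A\in\calR$, and, as in Remark~\ref{trans}, $A=\mathbf b+G$ with $G=-L$ an \emph{increasing cone} whose complement is convex. Since $F^I(uA)=\theta^{-1}H(u)$ by \eqref{e:F.I} and \eqref{inttail}, an asymptotic formula for $H$ also settles the hypothesis $F^I\in\mathscr S_A$ of Theorem~\ref{ruintail}, so the whole argument reduces to evaluating $H$.

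Substituting $v=uw$ in \eqref{inttail} gives
\[
\frac{H(u)}{u\,P(\|\mathbf X\|>u)}=\int_0^\infty\frac{P\bigl(\mathbf X\in u(A+w\mathbf c)\bigr)}{P(\|\mathbf X\|>u)}\,dw .
\]
For each fixed $w\ge 0$ the set $A+w\mathbf c$ again lies in $\calR$ (openness, monotonicity and convexity of the complement are preserved under translation, and $A+w\mathbf c\subset A$ keeps $\mathbf 0$ out of the closure), so $\mu(\partial(A+w\mathbf c))=0$ by the argument in the proof of Proposition~\ref{mrvprop}, and \eqref{e:regvar.d.st0} yields the pointwise convergence $P(\mathbf X\in u(A+w\mathbf c))/P(\|\mathbf X\|>u)\to\mu(A+w\mathbf c)$. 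To integrate this I need a $w$-integrable dominating bound uniform in large $u$, and here is where the cone property enters: with $\kappa:=\min_i(c_i/b_i)>0$ (legitimate as $\mathbf b,\mathbf c>\mathbf 0$), since $w(\mathbf c-\kappa\mathbf b)\ge\mathbf 0$ and $G$ is increasing and a cone, one gets $A+w\mathbf c=\mathbf b+w\mathbf c+G\subset (1+w\kappa)\mathbf b+G=(1+w\kappa)A$, hence
\[
\frac{P(\mathbf X\in u(A+w\mathbf c))}{P(\|\mathbf X\|>u)}\le\frac{\overline{F_A}\bigl(u(1+w\kappa)\bigr)}{P(\|\mathbf X\|>u)} .
\]
As $\overline{F_A}(u)=P(\mathbf X\in uA)\sim\mu(A)\,P(\|\mathbf X\|>u)$ is regularly varying of index $-\alpha$, Potter's bounds make the right-hand side at most $C\,(1+w\kappa)^{-\alpha+\delta}$ for all large $u$, uniformly in $w\ge 0$, and choosing $\delta<\alpha-1$ (possible since $\alpha>1$) makes this integrable in $w$ on $[0,\infty)$. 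Dominated convergence then produces
\[
\lim_{u\to\infty}\frac{H(u)}{u\,P(\|\mathbf X\|>u)}=\int_0^\infty\mu(\mathbf b-L+v\mathbf c)\,dv,
\]
a limit that is finite (by the same domination together with the scaling $\mu(sA)=s^{-\alpha}\mu(A)$) and strictly positive, since the non-degeneracy assumption $\mu\{x_i>0\}>0$ forces $\mu(A')>0$ for every $A'\in\calR$.

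Consequently $u\mapsto F^I(uA)$ is regularly varying of index $-(\alpha-1)<0$, so the distribution $t\mapsto 1-F^I(tA)$ lies in $\scrR\subset\scrS$; that is, $F^I\in\mathscr S_A$. Theorem~\ref{ruintail} then gives $\psi_{\mathbf b,L}(u)\sim H(u)$, and combining this with the displayed asymptotics for $H$ establishes \eqref{mrvruin1}. The only delicate step is the domination: everything hinges on the inclusion $A+w\mathbf c\subset(1+w\kappa)A$, which is exactly where the assumption that $L$ is a cone is used; with that inclusion in hand, Potter's theorem and the scaling of $\mu$ finish the job.
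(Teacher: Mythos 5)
Your argument is correct and follows the same overall route as the paper: apply Theorem~\ref{ruintail}, and evaluate the limit for $H$ by pointwise convergence plus dominated convergence, with the asymptotic for $H$ simultaneously verifying the hypothesis $F^I\in\mathscr S_A$ (since $\overline{F^I_A}$ is then regularly varying of index $-(\alpha-1)$, hence subexponential); you make this last point explicit where the paper leaves it implicit.

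The one substantive difference is the dominating function used to justify the interchange of limit and integral, and that is where the technical content of the proposition sits. The paper observes that $A+v\mathbf c\subset\{\bx:\ x_i>b_i+vc_i\ \text{for some}\ i\}$, uses the union bound, and applies Potter's inequality separately to each marginal tail $\overline{F_i}$; the non-degeneracy hypothesis $\mu\{x_i>0\}>0$ enters precisely to make each marginal regularly varying of index $\alpha$. You instead exploit the cone property of $G=-L$ to obtain the set inclusion $A+w\mathbf c\subset(1+w\kappa)A$ with $\kappa=\min_i c_i/b_i$, and then apply Potter's inequality directly to the one tail $\overline{F_A}$, which is regularly varying since $\overline{F_A}(u)\sim\mu(A)P(\|\mathbf X\|>u)$. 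Both produce an integrable majorant of order $(1+w\kappa)^{-\alpha+\delta}$ with $\delta<\alpha-1$. Your variant is a bit more geometric and makes the role of the cone hypothesis in Assumption~\ref{ass:ruin.set} transparent in the domination step: convexity of $G^c$ with $\mathbf 0\in G^c$ alone gives only $(1+w\kappa)G\subset G$, which is the \emph{wrong} direction for your inclusion, so $G=(1+w\kappa)G$ is genuinely needed. The paper's bound never rescales $A$ and works purely at the level of the marginals, so it sidesteps the cone property at this stage (it is still used in the problem setup, to identify the ruin region with $uA$). Both are valid; yours trades the use of the non-degeneracy of the marginal masses $\mu\{x_i>0\}$ in the domination for its use only in asserting $\mu(A)>0$ at the end.
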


\begin{proof}
By Theorem \ref{ruintail}, it suffices to show that
\begin{align*}
\lim_{u \to \infty} \frac{\int_0^\infty P(\mathbf{X} \in u A + v
  \mathbf{c}) \, dv}{u P(|\mathbf{X}| > u)} = \int_0^\infty \mu(A + v
  \mathbf{c}) \, dv\,,
\end{align*}
which we proceed to do. By a change of variables,
\begin{align} \label{e:ch.var}
\frac{\int_0^\infty P(\mathbf{X} \in u A + v \mathbf{c}) \, dv}{u
  P(|\mathbf{X}| > u)} &  = \frac{\int_0^\infty P(\mathbf{X} \in u (A
                         + v \mathbf{c})) \, dv}{P(|\mathbf{X}| >  u)}\,,
\end{align}
and for every $v>0$,
\begin{align*}
\frac{P(\mathbf{X} \in u (A + v \mathbf{c}))}{P(|\mathbf{X}| > u)} \to \mu(A + v \mathbf{c})
\end{align*}
as $u \to \infty$. In the last step we use \eqref{e:regvar.d.st0}, and
the fact that the tail measure does not charge the boundary of sets in
$\calR$, shown in the proof of Proposition \ref{mrvprop}. Therefore,
we only need to justify taking the limit inside the integral in
\eqref{e:ch.var}. However, by the definition of the  set $A$,
\begin{align*}
\frac{P(\mathbf{X} \in u (A + v \mathbf{c}))}{P(|\mathbf{X}| > u)}
&\leq \sum_{i=1}^d \frac{P(X^{(i)}>ub_i+uvc_i)}{P(X^{(i)}>u)}\,.
\end{align*}
The non-degeneracy assumption on the measure $\mu$ implies that each
$X^{(i)}$ is itself regularly varying with exponent
$\alpha$. Therefore, by the Potter bounds, there are finite positive
constants $C_i, \, i=1,\ldots, d$, and a number $\vep\in (0,\alpha-1)$
such that for all $u\geq 1$,
$$
\frac{P(X^{(i)}>ub_i+uvc_i)}{P(X^{(i)}>u)} \leq
C_i(b_i+vc_i)^{-(\alpha-\vep)}, \ i=1,\ldots, d\,.
$$
Since the functions in the right hand side are integrable, the
dominated convergence theorem applies.
\end{proof}


\bibliographystyle{mystyle}
\bibliography{bibfile}


\end{document}